\documentclass[11pt,leqno]{amsart}

\usepackage{amsmath}
\usepackage{amssymb}
\usepackage{a4wide}
\usepackage{bbm}
\usepackage{graphics}
\usepackage{epsfig}
\usepackage{color}
\usepackage{mathrsfs}
\usepackage[sans]{dsfont}

\parskip = 0.05 in



\newtheorem{theorem}{Theorem}[section]
\newtheorem{lemma}[theorem]{Lemma}

\newcounter{hypo}
\newcounter{hypoa}

\newcounter{hypoaa}
\newcounter{hypobb}

%






\def\C{{\mathbb C}}

\def\N{{\mathbb N}} 
\def\R{{\mathbb R}} 

\def\Z{{\mathbb Z}}



\def\CD{\mathcal {D}}

\def\CH{\mathcal {H}}
\def\CI{{\mathcal I}}

\def\CM{\mathcal {M}}

\def\CO{\mathcal {O}}
\def\CP{\mathcal {P}}
\def\CQ{\mathcal {Q}}

\def\CW{\mathcal {W}}

\def\SH{\mathscr {H}}
\def\SI{\mathscr {I}}


\def\re{\mathop{\rm Re}\nolimits}
 \def\im{\mathop{\rm Im}\nolimits}

\def\Op{\mathop{\rm Op}\nolimits}

\newcommand{\tr}{\operatorname{tr}}

\newcommand{\eqt}{\operatorname{ess-qt}}
\newcommand{\qt}{\operatorname{qt}}

\newcommand{\supp}{\operatorname{supp}}
\newcommand{\spe}{\operatorname{sp}}

\def\dist{\mathop{\rm dist}\nolimits}
\def\diag{\mathop{\rm diag}\nolimits}

\def\<{\langle}
\def\>{\rangle}

\def\res{\mathop{\rm Res}\nolimits}

\newcommand{\fract}[2]{\genfrac{}{}{0pt}{}{\scriptstyle #1}{\scriptstyle #2}}




\makeatletter
 \@addtoreset{equation}{section}
 \makeatother
 

\title{An example of resonance instability}

\author[J.-F. Bony]{Jean-Fran\c{c}ois Bony}
\address{Jean-Fran\c{c}ois Bony, IMB, CNRS (UMR 5251), Universit\'e de Bordeaux, 33405 Talence, France}
\email{bony@math.u-bordeaux.fr}
\author[S. Fujii\'e]{Setsuro Fujii\'e}
\address{Setsuro Fujii\'e, Department of Mathematical Sciences, Ritsumeikan University, 1-1-1 Noji-Higashi, Kusatsu, 525-8577 Japan}
\email{fujiie@fc.ritsumei.ac.jp}
\author[T. Ramond]{Thierry Ramond}
\address{Thierry Ramond, Universit\'e Paris-Saclay, CNRS, Laboratoire de math\'ematiques d'Orsay, 91405, Orsay, France}
\email{thierry.ramond@universite-paris-saclay.fr}
\author[M. Zerzeri]{Maher Zerzeri}
\address{Maher Zerzeri, Universit\'e Sorbonne Paris-Nord, LAGA, CNRS (UMR 7539), 93430 Villetaneuse, France}
\email{zerzeri@math.univ-paris13.fr}

\keywords{Resonances, semiclassical asymptotics, microlocal analysis, spectral instability, Schr\"{o}dinger operators}
\subjclass[2010]{35B34, 35B35, 81Q20, 37C25, 35J10, 35P20}

\thanks{\textbf{Acknowledgments:} The second author was partially supported by the JSPS KAKENHI Grant 18K03384}


\begin{document}

\begin{abstract}
We construct a semiclassical Schr\"{o}dinger operator such that the imaginary part of its resonances closest to the real axis changes by a term of size $h$ when a real compactly supported potential of size $o ( h )$ is added.
\end{abstract}

\maketitle

\section{Introduction} \label{s1}

In this note, we consider semiclassical Schr\"{o}dinger operators $P$ on $L^{2} ( \R^{n} )$, $n \geq 1$,
\begin{equation}\label{a3}
P = - h^{2} \Delta + V ( x ),
\end{equation}
where $V \in C^{\infty}_{0} ( \R^{n} ; \R )$ is a real-valued smooth compactly supported potential. Depending on the situation, one may also work with such operators outside a compact smooth obstacle with Dirichlet boundary condition. Since $P$ is a compactly supported perturbation of $- h^{2} \Delta$, the resonances of $P$ near the real axis are well-defined through the analytic distortion method or using the meromorphic extension of its truncated resolvent. We send back the reader to the books of Sj\"{o}strand \cite{Sj07_01} or Dyatlov and Zworski \cite{DyZw16_01} for a general presentation of resonance theory, and we denote $\res ( P )$ the set of resonances of $P$.

The stability of the resonances is a rather touchy question. Indeed, we do not know yet whether the concept of resonance persists under the perturbation by a non-analytic non-exponentially decreasing potential. Therefore, we only consider here perturbations of Schr\"{o}dinger operators \eqref{a3} by subprincipal real-valued smooth compactly supported potentials of the form $h^{\tau} W ( x )$ with $\tau > 0$ and $W \in C^{\infty}_{0} ( \R^{n} ; \R )$. But even in this setting, the stability of resonances is a subtle problem since stability results and instability results can be obtained for the same operator.

On one hand, the resonances tend to be stable as other spectral objects like the eigenvalues. This is particularly clear when the resonances are defined by complex distortion, since the usual perturbation theory of discrete spectrum can be directly applied to the distorted operator. But, even if the resonances are defined as the poles of the meromorphic extension of some weighted resolvent, Agmon \cite{Ag98_01,Ag98_02} has proved their stability. On the other hand, the resonances can be unstable since they do not come from a self-adjoint problem. Thus, some typical non self-adjoint effects may occur concerning the resonances even if $P$ is self-adjoint. For instance, the distorted operator may have a Jordan block or the truncated resolvent may have a pole of algebraic order greater than $1$ (see e.g. Sj\"{o}strand \cite[Section 4]{Sj87_01}).

\begin{figure}
\begin{center}
\begin{picture}(0,0)%
\includegraphics{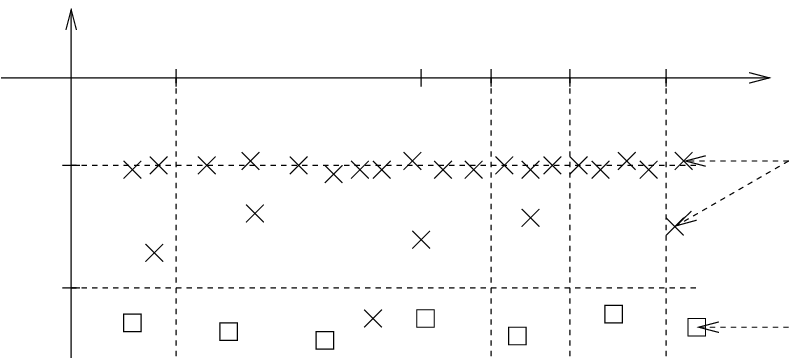}%
\end{picture}%
\setlength{\unitlength}{1105sp}%
\begingroup\makeatletter\ifx\SetFigFont\undefined%
\gdef\SetFigFont#1#2#3#4#5{%
  \reset@font\fontsize{#1}{#2pt}%
  \fontfamily{#3}\fontseries{#4}\fontshape{#5}%
  \selectfont}%
\fi\endgroup%
\begin{picture}(13837,6044)(-1221,-383)
\put(-449,689){\makebox(0,0)[rb]{\smash{{\SetFigFont{9}{10.8}{\rmdefault}{\mddefault}{\updefault}$- D_{0} h - \alpha h$}}}}
\put(7201,4739){\makebox(0,0)[b]{\smash{{\SetFigFont{9}{10.8}{\rmdefault}{\mddefault}{\updefault}$A h$}}}}
\put(8551,4739){\makebox(0,0)[b]{\smash{{\SetFigFont{9}{10.8}{\rmdefault}{\mddefault}{\updefault}$B h$}}}}
\put(-449,2789){\makebox(0,0)[rb]{\smash{{\SetFigFont{9}{10.8}{\rmdefault}{\mddefault}{\updefault}$- D_{0} h - \delta h$}}}}
\put(1801,4739){\makebox(0,0)[b]{\smash{{\SetFigFont{9}{10.8}{\rmdefault}{\mddefault}{\updefault}$- C h$}}}}
\put(10201,4739){\makebox(0,0)[b]{\smash{{\SetFigFont{9}{10.8}{\rmdefault}{\mddefault}{\updefault}$C h$}}}}
\put(6001,4739){\makebox(0,0)[b]{\smash{{\SetFigFont{9}{10.8}{\rmdefault}{\mddefault}{\updefault}$0$}}}}
\put(12226,4739){\makebox(0,0)[b]{\smash{{\SetFigFont{9}{10.8}{\rmdefault}{\mddefault}{\updefault}$z - E_{0}$}}}}
\put(12601,2864){\makebox(0,0)[lb]{\smash{{\SetFigFont{9}{10.8}{\rmdefault}{\mddefault}{\updefault}$\res \big( P + h^{1 +  \delta} W \big)$}}}}
\put(12601, 14){\makebox(0,0)[lb]{\smash{{\SetFigFont{9}{10.8}{\rmdefault}{\mddefault}{\updefault}$\res ( P )$}}}}
\end{picture}%
\end{center}
\caption{The spectral setting of Theorem \ref{a1}.} \label{f1}
\end{figure}

Our instability result is the following.

\begin{theorem}[Resonance instability]\sl \label{a1}
In dimension $n = 2$, one can construct an operator $P$ and a potential $W$ as above satisfying the following property for all $\delta > 0$ small enough. There exist a set $\SH \subset ] 0 , 1 ]$ with $0 \in \overline{\SH}$ and constants $D_{0} , E_{0} , \alpha > 0$ such that, for all $C > 0$ and $- C \leq A < B \leq C$,

$i)$ On one hand, $P$ has no resonance $z$ with $\re z \in E_{0} + [ - C h , C h ]$ and
\begin{equation}
\im z \geq - D_{0} h - \alpha h ,
\end{equation}
for $h \in \SH$ small enough.

$ii)$ On the other hand, the resonances $z$ of $P + h^{1 + \delta} W$ with $\re z \in E_{0} + [ A h , B h ]$ closest to the real axis satisfy
\begin{equation} \label{a2}
\im z \sim - D_{0} h - \delta h ,
\end{equation}
for $h \in \SH$ small enough.
\end{theorem}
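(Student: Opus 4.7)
The plan is to build $P$ and $W$ within the framework of resonances generated by a homoclinic orbit at a hyperbolic fixed point, and to exploit the sensitivity of the associated quantization condition to a subprincipal perturbation.

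\emph{Setup and quantization condition.} In dimension $n=2$, take $V \in C^\infty_0(\R^2;\R)$ with a non-degenerate maximum at some $x_0$ of value $E_0$, so that $(x_0, 0)$ is a hyperbolic fixed point of $p(x,\xi) = |\xi|^2 + V(x)$ with Lyapunov exponents $\lambda_1 \leq \lambda_2$. Arrange further that the trapped set at energy $E_0$ contains at least one homoclinic orbit $\gamma$ attached to $(x_0,0)$, and take $W \in C^\infty_0(\R^2;\R)$ supported near a regular point of $\gamma$ so that its interaction with the homoclinic dynamics is non-trivial. The constant $D_0$ is the ``ground-state'' imaginary part of this geometric picture, determined by the Lyapunov exponents. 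Propagating Lagrangian states along the stable and unstable manifolds of $(x_0, 0)$ and matching them through Gamma-function connection formulas (the parabolic-cylinder device at a hyperbolic point) reduces the resonance equation for $P + h^{1+\delta} W$ near $E_0$ to a transcendental condition
\begin{equation}\label{bfrz1}
\CF(z,h) \, := \, \CM_P(z,h) - 1 + h^{1+\delta} \CN(z,h) + O(h^{1+2\delta}) \, = \, 0,
\end{equation}
valid on the rectangle $\re z \in E_0 + [-Ch, Ch]$, $\im z \geq -D_0 h - Ch$. Here $\CM_P$ is the unperturbed monodromy, $\CN$ is a holomorphic function of $z$ non-vanishing at $E_0$ thanks to the choice of $W$, and $\CM_P$ decomposes as a classical action phase $e^{iS(z)/h}$ times the Gamma-function contributions from each pass through the saddle.

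\emph{Proof of $(i)$ and $(ii)$.} For $(i)$, we tune the principal action $S(E_0)$ along $\gamma$ (a free real parameter, adjustable by deforming $V$ along $\gamma$ outside a neighborhood of $x_0$) and select $\SH \subset (0,1]$ with $0 \in \overline{\SH}$ so that $|\CM_P(z,h) - 1| \geq c\, h^\alpha$ uniformly on $\im z \geq -D_0 h - \alpha h$, $\re z \in E_0 + [-Ch, Ch]$, for $h \in \SH$. The choice of $\SH$ is dictated by the need to compensate the logarithmic drift of $\arg \CM_P$ in $\log h$ and to keep the argument away from $2\pi\Z$ uniformly in the window. This rules out $\CF = 0$ in the strip and establishes $(i)$. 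For $(ii)$, \eqref{bfrz1} reads $\CM_P - 1 = -h^{1+\delta}\CN + O(h^{1+2\delta})$. On lines $\im z = -D_0 h - sh$, the Gamma-function factors give $|\CM_P - 1|$ a definite scaling in $s$ of the form $h^{s}$ (after rescaling $\lambda_j$), and the balance $|\CM_P - 1| \sim h^{1+\delta}$ forces $s = \delta + o(1)$, which is exactly \eqref{a2}. The argument equation is affine in $\re z/h$ modulo $2\pi$ and, for $h \in \SH$, admits solutions in any sub-interval $E_0 + [Ah, Bh]$, giving the spacing of $\sim h$ and the existence part of $(ii)$.

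\emph{Main obstacle.} The heart of the argument is the cancellation step for $(i)$: a single real parameter $S(E_0)$, combined with the discrete freedom in choosing $\SH$, must force $|\CM_P - 1|$ to be bounded below by $c h^\alpha$ uniformly over a rectangle of width $2Ch$ in $\re z$ and height $\alpha h$ in $\im z$, not merely at a point. This uniform bound is what the two-dimensional geometry (two independent frequencies from $\lambda_1, \lambda_2$) and the logarithmic drift of the Gamma-function phases in $\log h$ make possible. Once this is secured, the perturbative step $(ii)$ reduces to a first-order expansion in $W$ inside the quantization functional \eqref{bfrz1}.
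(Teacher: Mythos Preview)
Your proposal has a genuine gap in part $(i)$, and it is precisely the point you flag as the ``main obstacle''. With a \emph{single} homoclinic trajectory, the unperturbed monodromy $\CM_{P}$ is a single non-vanishing term of the form $e^{i A / h} \Gamma(\cdot) \times (\text{nonzero amplitude})$, and its modulus crosses $1$ along a curve near $\im z = - D_{0} h$. On that curve the argument of $\CM_{P}$ winds by an amount of order $\vert \ln h \vert$ as $\re z$ ranges over $E_{0} + [-C h , C h]$ (this is the logarithmic drift you mention), so $\CM_{P} = 1$ has on the order of $\vert \ln h \vert$ solutions in the window regardless of the value of $S(E_{0})$ or the choice of $\SH$. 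A single real parameter cannot suppress all of them simultaneously; you would at best kill one and shift the others by $O(h / \vert \ln h \vert)$. The uniform lower bound $\vert \CM_{P} - 1 \vert \geq c h^{\alpha}$ on the whole rectangle therefore cannot be obtained this way.

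The paper's mechanism is different and essentially matricial. It uses $K \geq 2$ homoclinic trajectories (in fact $K = 3$, with two of them symmetric), so that the quantization operator is a rank-one $K \times K$ matrix $\CQ$ rather than a scalar. The geometry is then tuned so that the \emph{sum} $\mu(\tau , h) = \tr \CQ$ over the trajectories vanishes identically in $\tau$ for $h \in \SH$; this is destructive interference between the actions and amplitudes of the different $\gamma_{k}$, not a phase condition on a single orbit. Identical vanishing of the trace makes $\CQ$ nilpotent ($\CQ^{2} = 0$), whence $(1 - h^{S/\lambda_{1} - 1/2} \CQ)^{-1} = 1 + h^{S/\lambda_{1} - 1/2} \CQ$ is bounded and $P$ has no resonance in the strip of depth $D_{0} h + \alpha h$. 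For $(ii)$, the potential $W$ is supported on exactly one of the trajectories, so it breaks the interference: the perturbed matrix $\widetilde{\CQ}$ is no longer nilpotent, and the factorization $(1 - h^{S/\lambda_{1} - 1/2} \widetilde{\CQ}) = (1 - h^{S/\lambda_{1} - 1/2 + \delta} \CW \CQ)(1 - h^{S/\lambda_{1} - 1/2} \CQ) + \cdots$ shows that the effective quantization condition becomes $1 \in \spe(h^{S/\lambda_{1} - 1/2 + \delta} \CW \CQ)$, whose nonzero eigenvalue $- i w \CQ_{1,1}$ places the resonances at $\im z \sim - D_{0} h - \delta \lambda_{1} h$. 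Your scaling heuristic $\vert \CM_{P} - 1 \vert \sim h^{s}$ is not what happens here; the $h^{\delta}$ shift comes from the extra factor $h^{\delta}$ appearing in the exponent after the nilpotent factor is peeled off, which has no scalar analogue.
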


The result is illustrated in Figure \ref{f1}. Theorem \ref{a1} $ii)$ provides at least one resonance $z$ of $P + h^{1 + \delta} W$ satisfying $\re z \in E_{0} + [ A h , B h ]$ and $\im z \sim - D_{0} h - \delta h$. But its proof shows that the number of such resonances is at least of order $\vert \ln h \vert$. In particular, the {\sl essential quantum trapping} in $E_{0} + [ A h , B h ]$ defined by
\begin{equation}
\eqt ( Q ) = \lim_{n \to + \infty} \limsup_{\fract{h \to 0}{h \in \SH}} \inf_{\fract{z_{1} , \ldots , z_{n} \in\res ( Q )}{\re z_{\bullet} \in E_{0} + [ A h , B h ]}} \sup_{\fract{z \in\res ( Q ) \setminus \{ z_{1} , \ldots , z_{n} \}}{\re z \in E_{0} + [ A h , B h ]}} \frac{h}{\vert \im z \vert} ,
\end{equation}
increases by at least $( \alpha - \delta ) ( D_{0} + \alpha )^{- 1} ( D_{0} + \delta )^{- 1}$ when we add the perturbation $h^{1 + \delta} W$ to the operator $P$. Thus, the resonance instability described here is not an anomaly due to an exceptional resonance or a Jordan block but a phenomenon mixing geometry and analysis.

In the statement of the previous result, we do not specify the subset of semiclassical parameters $\SH$. In fact, depending on the geometric situation, the resonance instability may occur on the whole interval $\SH = ] 0 , 1 ]$ or only near a sequence $\SH$ like $\{ j^{- 1} ; \ j \in \N^{*} \}$. Operators corresponding to these different situations are given at the end of Section \ref{s2}.

For $0 < \kappa \ll 1$ fixed, one can show that the resonances $z$ of $P + \kappa h W$ with $\re z \in E_{0} + [ A h , B h ]$ closest to the real axis satisfy $\im z \sim - D_{0} h$ for $h \in \SH$ small enough. The proof of this point is similar to that of Theorem \ref{a1}. On the contrary, for larger values of $\kappa$, some cancellations may appear and $P + \kappa h W$ may have a resonance free region of size $D_{0} h + \alpha h$ below the real axis as for $P$.

The constructions in Theorem \ref{a1} can be realized in any dimension $n \geq 2$, but our method of proof does not work in dimension $n = 1$. Indeed, the Hamiltonian vector field must have an anisotropic hyperbolic fixed point. Nevertheless, we do not know yet if the resonance instability phenomenon described here occurs in dimension one.

Let $P_{\theta}$ denote the operator $P$ in the proof of Theorem \ref{a1} after a complex distortion of angle $\theta = h \vert \ln h \vert$. Its resolvent satisfies a polynomial estimate in $\Omega = E_{0} + [ - C h , C h ] + i [ - D_{0} h - \alpha h , h ]$. This means that, for some $M > 0$, we have
\begin{equation}
\big\Vert ( P_{\theta} - z )^{- 1} \big\Vert \lesssim h^{- M} ,
\end{equation}
uniformly for $z \in \Omega$. By the usual perturbation argument, it implies that $P + Q$ has no resonance in $\Omega$ for any distortable perturbation $Q$ of size $o ( h^{M} )$. The stability of resonances under small enough perturbations has already been observed (see e.g. Agmon \cite{Ag98_01,Ag98_02}). Summing up, the resonances of $P$ are stable for perturbations of size $o ( h^{M} )$ and unstable for some perturbations of size $h^{1 + \delta}$ (showing that $M \geq 1 + \delta$).

\begin{figure}
\begin{center}
\begin{picture}(0,0)%
\includegraphics{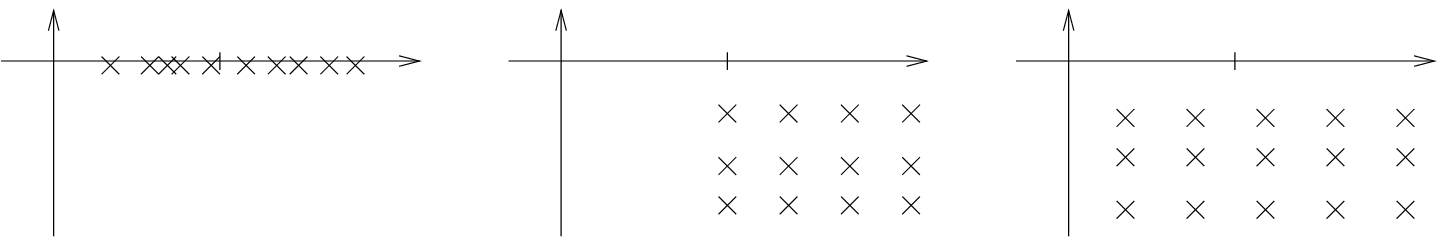}%
\end{picture}%
\setlength{\unitlength}{1105sp}%
\begingroup\makeatletter\ifx\SetFigFont\undefined%
\gdef\SetFigFont#1#2#3#4#5{%
  \reset@font\fontsize{#1}{#2pt}%
  \fontfamily{#3}\fontseries{#4}\fontshape{#5}%
  \selectfont}%
\fi\endgroup%
\begin{picture}(24644,4661)(-9621,700)
\put(11551,4739){\makebox(0,0)[b]{\smash{{\SetFigFont{9}{10.8}{\rmdefault}{\mddefault}{\updefault}$E_{0}$}}}}
\put(11701,764){\makebox(0,0)[b]{\smash{{\SetFigFont{9}{10.8}{\rmdefault}{\mddefault}{\updefault}$(C)$}}}}
\put(2851,4739){\makebox(0,0)[b]{\smash{{\SetFigFont{9}{10.8}{\rmdefault}{\mddefault}{\updefault}$E_{0}$}}}}
\put(-5849,4739){\makebox(0,0)[b]{\smash{{\SetFigFont{9}{10.8}{\rmdefault}{\mddefault}{\updefault}$E_{0}$}}}}
\put(3001,764){\makebox(0,0)[b]{\smash{{\SetFigFont{9}{10.8}{\rmdefault}{\mddefault}{\updefault}$(B)$}}}}
\put(-5699,764){\makebox(0,0)[b]{\smash{{\SetFigFont{9}{10.8}{\rmdefault}{\mddefault}{\updefault}$(A)$}}}}
\end{picture}%
\end{center}
\caption{The resonances generated by $(A)$ a well in the island, $(B)$ a non-degenerate critical point and $(C)$ a hyperbolic closed trajectory.} \label{f2}
\end{figure}

The present result is obtained for a Schr\"{o}dinger operator whose trapped set at energy $E_{0}$ consists of a hyperbolic fixed point and homoclinic trajectories, following our recent paper \cite{BoFuRaZe18_01}. In fact, the instability phenomenon obtained here does not hold in the geometric settings previously studied (see Figure \ref{f2}). In the ``well in the island'' situation, the resonances are known to be exponentially close to the real axis (see Helffer and Sj\"{o}strand \cite{HeSj86_01} for globally analytic potentials and Lahmar-Benbernou, Martinez and the second author \cite{FuLaMa11_01} for potentials analytic at infinity). Adding a subprincipal real potential $h W ( x )$ does not change this properties. When the trapped set at energy $E_{0}$ consists of a non-degenerate critical point (say at $( x_{0} , 0 ) \in T^{*} \R^{n}$), Sj\"{o}strand \cite{Sj87_01} has proved that the resonances form, modulo $o ( h )$, a quarter of a rectangular lattice which is translated by $h W ( x_{0} )$ when a subprincipal potential $h W ( x )$ is added. Finally, the asymptotic of the resonances generated by a hyperbolic closed trajectory has been obtained by G{\'e}rard and Sj\"{o}strand \cite{GeSj87_01} (see also Ikawa \cite{Ik83_01} and G{\'e}rard \cite{Ge88_01} for obstacles). Modulo $o ( h )$, they form half of a rectangular lattice which is translated by a real quantity after perturbation by a real potential $h W ( x )$. Summing up, the imaginary part of the resonances is {\sl very stable} in the three previous examples: it moves only by $o ( h )$ when a perturbation by a real potential of size $h$ is applied. In other words, if the {\sl quantum trapping} (or maximum of the quantum lifetime) in $E_{0} + [ -C h , C h ]$ of an operator $Q$ is defined by
\begin{equation}
\qt ( Q ) = \limsup_{h \to 0} \sup_{\fract{z \in\res ( Q )}{\re z \in E_{0} + [ -C h , C h ]}} \frac{h}{\vert \im z \vert} ,
\end{equation}
with the conventions that $\qt ( Q ) = + \infty$ if the limit diverges and $\qt ( Q ) = 0$ if $Q$ has no resonance, we have $\qt ( P ) = \qt ( P + h W )$ in these examples. The situation is completely opposite in Theorem \ref{a1} since a self-adjoint perturbation of size $o ( h )$ induces a change of size $1$ of the quantum trapping. By definition, we always have $\qt ( Q ) \in [ 0 , + \infty ]$ and $\qt ( Q ) \geq \eqt ( Q )$. Moreover, if the resonance expansion of the quantum propagator holds, we have $\Vert \chi e^{- i t Q / h} \varphi ( Q ) \chi \Vert \approx e^{t / \qt ( Q )}$ for $t \gg1$ and $h$ in an appropriate sequence, justifying the name of quantum trapping. Other results in scattering theory provide resonance free regions, that is upper bounds on the quantum trapping, under geometric assumptions. In general, the bounds obtained do not depend on the subprincipal symbol, assumed to be self-adjoint in an appropriate class (see for instance Section 3.2 of Nonnenmacher and Zworski \cite{NoZw09_01}). In the present setting, Section 3.1 of \cite{BoFuRaZe18_01} implies $\qt ( P ) \leq D_{0}^{- 1}$, but Theorem \ref{a1} $i)$ shows that this inequality is not sharp.

Theorem \ref{a1} may seem natural since the distorted resolvent is generally large in the unphysical sheet and small perturbations may produce eigenvalues. More precisely, the norm of the distorted resolvent is known to be larger than $h^{- 1}$, that is
\begin{equation*}
\big\Vert ( P_{\theta} - z )^{- 1} \big\Vert \gg h^{- 1} ,
\end{equation*}
with $\im z < 0$, in many cases (see e.g. Burq and two of the authors \cite{BoBuRa10_01} or Dyatlov and Waters \cite{DyWa16_01}). By the pseudospectral theory (see e.g. Section I.4 of Trefethen and Embree \cite{TrEm05_01}), there exists a bounded operator $Q_{\theta}$ of size $o ( h )$ such that $z$ is precisely an eigenvalue of $P_{\theta} + Q_{\theta}$. Nevertheless, it is not clear that $Q_{\theta}$ is the distortion of some operator $Q$, that $Q$ is a potential and that $Q$ is self-adjoint. In fact, as explained in the previous paragraph, this is not always the case.

This instability phenomenon is due to the non self-adjoint nature of the resonances (even for self-adjoint operators). Such a property never holds for the usual spectrum in the self-adjoint framework. Indeed, for any self-adjoint operator $P$ and any bounded perturbation $W$, the spectrum of $P + W$ satisfies
\begin{equation*}
\sigma ( P + W ) \subset \sigma ( P ) + B ( 0 , \Vert W \Vert ) .
\end{equation*}
Thus, a perturbation of size $h^{1 + \delta}$ of a self-adjoint operator can not lead to a perturbation of size $h$ of its spectrum.

The operator $P$ and the potential $W$ are constructed in Section \ref{s2}. The instability phenomenon stated in Theorem \ref{a1} is proved in Section \ref{s3}.

\section{Construction of the operators} \label{s2}

\begin{figure}
\begin{center}
\begin{picture}(0,0)%
\includegraphics{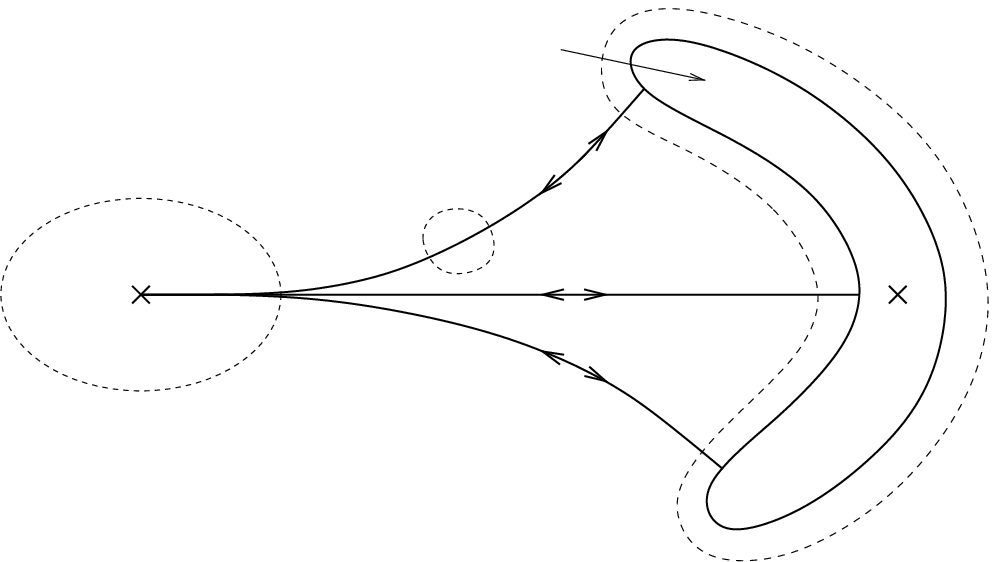}%
\end{picture}%
\setlength{\unitlength}{1105sp}%
\begingroup\makeatletter\ifx\SetFigFont\undefined%
\gdef\SetFigFont#1#2#3#4#5{%
  \reset@font\fontsize{#1}{#2pt}%
  \fontfamily{#3}\fontseries{#4}\fontshape{#5}%
  \selectfont}%
\fi\endgroup%
\begin{picture}(16960,9507)(-10814,-9744)
\put(-8399,-4786){\makebox(0,0)[b]{\smash{{\SetFigFont{9}{10.8}{\rmdefault}{\mddefault}{\updefault}$0$}}}}
\put(-8399,-3061){\makebox(0,0)[b]{\smash{{\SetFigFont{9}{10.8}{\rmdefault}{\mddefault}{\updefault}$\supp V_{\rm top}$}}}}
\put(-899,-4786){\makebox(0,0)[b]{\smash{{\SetFigFont{9}{10.8}{\rmdefault}{\mddefault}{\updefault}$\pi_{x} ( \gamma_{2} )$}}}}
\put(-749,-5986){\makebox(0,0)[b]{\smash{{\SetFigFont{9}{10.8}{\rmdefault}{\mddefault}{\updefault}$\pi_{x} ( \gamma_{3} )$}}}}
\put(4651,-1036){\makebox(0,0)[b]{\smash{{\SetFigFont{9}{10.8}{\rmdefault}{\mddefault}{\updefault}$\supp V_{\rm ref}$}}}}
\put(-2999,-3361){\makebox(0,0)[b]{\smash{{\SetFigFont{9}{10.8}{\rmdefault}{\mddefault}{\updefault}$\supp W$}}}}
\put(-2774,-1111){\makebox(0,0)[b]{\smash{{\SetFigFont{9}{10.8}{\rmdefault}{\mddefault}{\updefault}$\{ V_{\rm ref} \geq E_{0} \}$}}}}
\put(-1499,-2386){\makebox(0,0)[b]{\smash{{\SetFigFont{9}{10.8}{\rmdefault}{\mddefault}{\updefault}$\pi_{x} ( \gamma_{1} )$}}}}
\put(4576,-4786){\makebox(0,0)[b]{\smash{{\SetFigFont{9}{10.8}{\rmdefault}{\mddefault}{\updefault}$( a , 0 )$}}}}
\end{picture}%
\end{center}
\caption{The potentials $V = V _{\rm top} + V_{\rm ref}$ and $W$.} \label{f3}
\end{figure}

To construct a Schr\"{o}dinger operator $P = - h^{2} \Delta + V ( x )$ as in \eqref{a3} with unstable resonances, we follow Example 4.23 and Example 4.24 (B) of \cite{BoFuRaZe18_01}. We send back the reader to this paper for a slightly different presentation, some close geometric situations and general results about resonances generated by homoclinic trajectories. As usual, $p ( x , \xi ) = \xi^{2} + V ( x )$ denotes the symbol of $P$, its associated Hamiltonian vector field is
\begin{equation*}
H_{p} = \partial_{\xi} p \cdot \partial_{x} - \partial_{x} p \cdot \partial_{\xi} = 2 \xi \cdot \partial_{x} - \nabla V ( x ) \cdot \partial_{\xi} ,
\end{equation*}
and the trapped set at energy $E$ for $P$ is
\begin{equation*}
K ( E ) = \big\{ ( x , \xi ) \in p^{- 1} ( E ) ; \ t \mapsto \exp ( t H_{p} ) ( x , \xi ) \text{ is bounded} \big\} .
\end{equation*}
Recall that $K ( E )$ is compact and stable by the Hamiltonian flow for $E > 0$.

In dimension $n =2$, we consider the potential
\begin{equation}
V ( x ) = V_{\rm top} ( x ) + V_{\rm ref} ( x ) ,
\end{equation}
as in Figure \ref{f3} and described below. On one hand, the potential $V_{\rm top}$ is of the form $V_{\rm top} ( x ) = V_{1} ( x_{1} ) V_{2} ( x_{2} )$ where the functions $V_{\bullet} \in C^{\infty}_{0} ( \R )$ are single barriers (see Figure \ref{f4}) with
\begin{equation*}
V_{1} ( x_{1} ) = E_{0} - \frac{\lambda_{1}^{2}}{4} x_{1}^{2} + \CO ( x_{1}^{3} )  \qquad \text{and} \qquad   V_{2} ( x_{2} ) = 1 - \frac{\lambda_{2}^{2}}{4 E_{0}} x_{2}^{2} + \CO ( x_{2}^{3} ) ,
\end{equation*}
near $0$ and $0 < \lambda_{1} < \lambda_{2}$. In particular, $V_{\rm top}$ is an anisotropic bump,
\begin{equation*}
V_{\rm top} ( x ) = E_{0} - \frac{\lambda_{1}^{2}}{4} x_{1}^{2} - \frac{\lambda_{2}^{2}}{4} x_{2}^{2} + \CO ( x^{3} ) ,
\end{equation*}
near $0$ and $( 0 , 0 )$ is a hyperbolic fixed point for $H_{p}$. The stable/unstable manifold theorem ensures the existence of the incoming/outgoing Lagrangian manifolds $\Lambda_{\pm}$ characterized by
\begin{equation*}
\Lambda_{\pm} = \big\{ ( x , \xi ) \in T^{*} \R^{2} ; \ \exp ( t H_{p} ) ( x , \xi ) \to ( 0 , 0 ) \text{ as } t \to \mp \infty \big\} .
\end{equation*}
They are stable by the Hamiltonian flow and included in $p^{- 1} ( E_{0} )$. Eventually, there exist two smooth functions $\varphi_{\pm}$, defined in a vicinity of $0$, satisfying
\begin{equation} \label{a4}
\varphi_{\pm} ( x ) = \pm \sum_{j = 1}^{2} \frac{\lambda_{j}}{4} x_{j}^{2} + \CO ( x^{3} ) ,
\end{equation}
and such that $\Lambda_{\pm} = \{ ( x , \xi ) ; \ \xi = \nabla \varphi_{\pm} (x) \}$ near 
$( 0 , 0 )$.

\begin{figure}[!h]
\begin{center}
\begin{picture}(0,0)%
\includegraphics{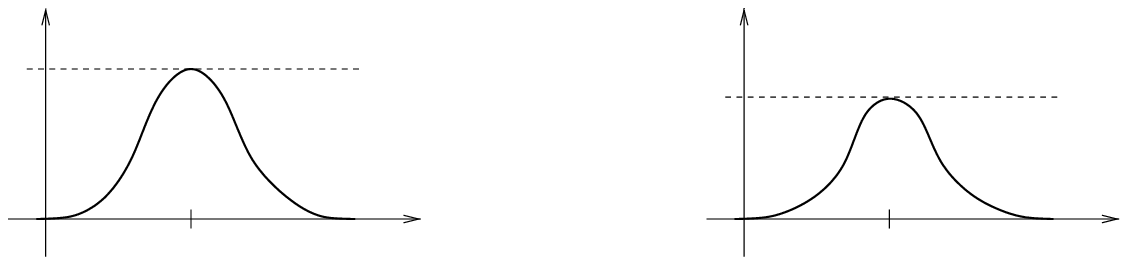}%
\end{picture}%
\setlength{\unitlength}{1184sp}%
\begingroup\makeatletter\ifx\SetFigFont\undefined%
\gdef\SetFigFont#1#2#3#4#5{%
  \reset@font\fontsize{#1}{#2pt}%
  \fontfamily{#3}\fontseries{#4}\fontshape{#5}%
  \selectfont}%
\fi\endgroup%
\begin{picture}(18262,4052)(136,-5816)
\put(14701,-5761){\makebox(0,0)[b]{\smash{{\SetFigFont{9}{10.8}{\rmdefault}{\mddefault}{\updefault}$0$}}}}
\put(3526,-5761){\makebox(0,0)[b]{\smash{{\SetFigFont{9}{10.8}{\rmdefault}{\mddefault}{\updefault}$0$}}}}
\put(4576,-4111){\makebox(0,0)[lb]{\smash{{\SetFigFont{9}{10.8}{\rmdefault}{\mddefault}{\updefault}$V_{1} ( x_{1} )$}}}}
\put(6901,-4936){\makebox(0,0)[lb]{\smash{{\SetFigFont{9}{10.8}{\rmdefault}{\mddefault}{\updefault}$x_{1} \in \R$}}}}
\put(18076,-4936){\makebox(0,0)[lb]{\smash{{\SetFigFont{9}{10.8}{\rmdefault}{\mddefault}{\updefault}$x_{2} \in \R$}}}}
\put(11626,-3361){\makebox(0,0)[lb]{\smash{{\SetFigFont{9}{10.8}{\rmdefault}{\mddefault}{\updefault}$1$}}}}
\put(15601,-4186){\makebox(0,0)[lb]{\smash{{\SetFigFont{9}{10.8}{\rmdefault}{\mddefault}{\updefault}$V_{2} ( x_{2} )$}}}}
\put(151,-2911){\makebox(0,0)[lb]{\smash{{\SetFigFont{9}{10.8}{\rmdefault}{\mddefault}{\updefault}$E_{0}$}}}}
\end{picture}%
\end{center}
\caption{The potentials $V_{1}$ and $V_{2}$.} \label{f4}
\end{figure}

On the other hand, the reflecting potential $V_{\rm ref}$ is non-trapping and localized near $( a , 0 ) \in \R^{2}$ with $a$ large enough. A dynamical result (Lemma B.1 of \cite{BoFuRaZe18_01}) ensures that no Hamiltonian trajectory of energy $E_{0}$ can start from the support of $V_{\rm ref}$, touch the support of $V_{\rm top}$ and then come back to the support of $V_{\rm ref}$. Thus, a trapped trajectory of energy $E_{0}$ is either $\{ ( 0 , 0 ) \}$ or a Hamiltonian trajectory starting asymptotically from the origin, touching the support of $V_{\rm ref}$ and coming back to the origin; these latter trajectories are called homoclinic. In other words, $K ( E_{0} )$ satisfies
\begin{equation*}
K ( E_{0} ) = \Lambda_{-} \cap \Lambda_{+} ,
\end{equation*}
and $\CH = \Lambda_{-} \cap \Lambda_{+} \setminus \{ ( 0 , 0 ) \}$ denotes the set of homoclinic trajectories.

Giving to $V_{\rm ref}$ the form of a ``croissant'' barrier, we can make sure that $\CH$ consists of a finite number of trajectories $\{ \gamma_{1} , \ldots , \gamma_{K} \}$ on which $\Lambda_{-}$ and $\Lambda_{+}$ intersect transversally. In the sequel, we will need at least two homoclinic trajectories, that is $K \geq 2$. Such a geometric configuration can also be realized replacing the potential barrier $V_{\rm ref}$ by an obstacle $\CO$ having essentially the form of $\{ V_{\rm ref} ( x ) > E_{0} \}$, the operator being $P = - h^{2} \Delta_{\R^{2} \setminus \CO} + V_{\rm top} ( x )$ with Dirichlet boundary condition (see Figure \ref{f5}). In that case, one can easily realize a situation where $K = 3$ whereas it seems complicated to have $K = 2$ (see Example 4.14 of \cite{BoFuRaZe18_01}).

\begin{figure}
\begin{center}
\begin{picture}(0,0)%
\includegraphics{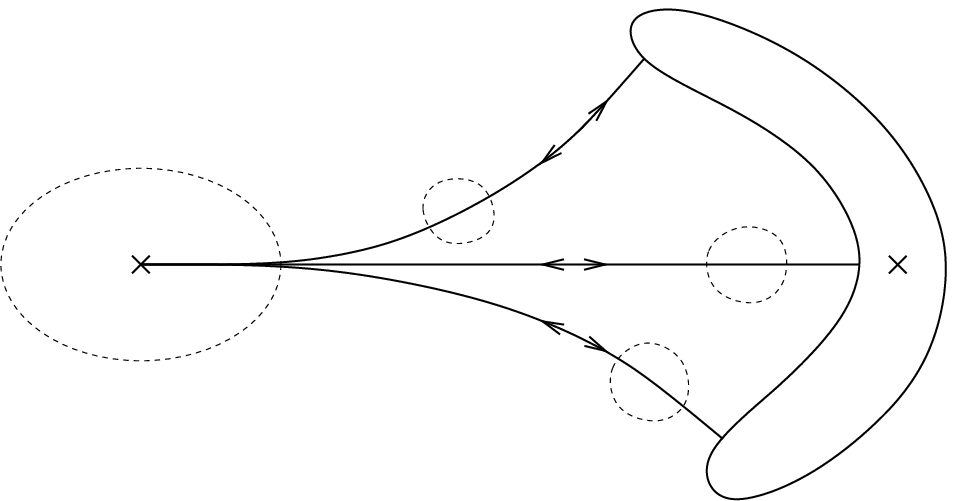}%
\end{picture}%
\setlength{\unitlength}{1105sp}%
\begingroup\makeatletter\ifx\SetFigFont\undefined%
\gdef\SetFigFont#1#2#3#4#5{%
  \reset@font\fontsize{#1}{#2pt}%
  \fontfamily{#3}\fontseries{#4}\fontshape{#5}%
  \selectfont}%
\fi\endgroup%
\begin{picture}(16246,8461)(-10814,-9217)
\put(2026,-4111){\makebox(0,0)[b]{\smash{{\SetFigFont{9}{10.8}{\rmdefault}{\mddefault}{\updefault}$\supp V_{\rm add}$}}}}
\put(-8399,-4786){\makebox(0,0)[b]{\smash{{\SetFigFont{9}{10.8}{\rmdefault}{\mddefault}{\updefault}$0$}}}}
\put(2326,-2236){\makebox(0,0)[b]{\smash{{\SetFigFont{9}{10.8}{\rmdefault}{\mddefault}{\updefault}$\CO$}}}}
\put(-8399,-3061){\makebox(0,0)[b]{\smash{{\SetFigFont{9}{10.8}{\rmdefault}{\mddefault}{\updefault}$\supp V_{\rm top}$}}}}
\put(-899,-4786){\makebox(0,0)[b]{\smash{{\SetFigFont{9}{10.8}{\rmdefault}{\mddefault}{\updefault}$\pi_{x} ( \gamma_{2} )$}}}}
\put(-749,-5986){\makebox(0,0)[b]{\smash{{\SetFigFont{9}{10.8}{\rmdefault}{\mddefault}{\updefault}$\pi_{x} ( \gamma_{3} )$}}}}
\put(-1424,-2386){\makebox(0,0)[b]{\smash{{\SetFigFont{9}{10.8}{\rmdefault}{\mddefault}{\updefault}$\pi_{x} ( \gamma_{1} )$}}}}
\put(-2999,-3361){\makebox(0,0)[b]{\smash{{\SetFigFont{9}{10.8}{\rmdefault}{\mddefault}{\updefault}$\supp W$}}}}
\put( 76,-8461){\makebox(0,0)[b]{\smash{{\SetFigFont{9}{10.8}{\rmdefault}{\mddefault}{\updefault}$\supp V_{\rm abs}$}}}}
\put(4576,-4786){\makebox(0,0)[b]{\smash{{\SetFigFont{9}{10.8}{\rmdefault}{\mddefault}{\updefault}$( a , 0 )$}}}}
\end{picture}%
\end{center}
\caption{A realization with a potential $V_{\rm top}$ and an obstacle $\CO$.} \label{f5}
\end{figure}

As explained in the next section (see also Example 4.24 of \cite{BoFuRaZe18_01}), the resonance instability is governed by the function
\begin{equation} \label{a5}
\mu ( \tau , h ) = \Gamma \Big( \frac{\lambda_{1} + \lambda_{2}}{2 \lambda_{1}} - i \frac{\tau}{\lambda_{1}} \Big) e^{- \frac{\pi \tau}{2 \lambda_{1}}} \sum_{k = 1}^{K} e^{i A_{k} / h} B_{k} e^{i T_{k} \tau} ,
\end{equation}
for $\tau \in \C$, where $A_{k} , B_{k} , T_{k}$ are dynamical quantities related to the curve $\gamma_{k} = ( x_{k} , \xi_{k} )$. We recall quickly how these quantities are defined and send back the reader to \cite{BoFuRaZe18_01} for the proof of convergence of the various objects. First,
\begin{equation*}
A_{k} = \int_{\gamma_{k}} \xi \cdot d x ,
\end{equation*}
is the action along $\gamma_{k}$. Let $\nu_{k}$ denote the Maslov index of $\Lambda_{+}$ along $\gamma_{k}$. The function $x_{k} ( t )$ has the following asymptotics
\begin{equation*}
x_{k} (t) = g_{\pm}^{k} e^{\pm \lambda_{1} t} + o \big( e^{\pm \lambda_{1} t} \big) ,
\end{equation*}
as $t \to \mp \infty$ for some vector $g_{\pm}^{k} \in \R^{2}$. As a matter of fact, $g_{\pm}^{k}$ is collinear to the first vector of the canonical basis $( 1 , 0 )$ and do not vanish. Eventually, if $\gamma_{k} ( t , y ) = ( x_{k} ( t , y) , \xi_{k} ( t , y ) ) : \R \times \R \longrightarrow T^{*} \R^{2}$ is a smooth parametrization of $\Lambda_{+}$ by Hamiltonian curves such that $\gamma_{k} ( t , 0 ) = \gamma_{k} ( t )$, the limits
\begin{equation*}
\begin{aligned}
\CM_{k}^{+} & = \lim_{s \to - \infty} \sqrt{\Big\vert \det \frac{\partial x_{k} ( t , y )}{\partial ( t , y )} \vert_{t = s , \ y = 0} \Big\vert} e^{- s \frac{\lambda_{1} + \lambda_{2}}{2}} ,   \\
\CM_{k}^{-} & = \lim_{s \to + \infty} \sqrt{\Big\vert \det \frac{\partial x_{k} ( t , y )}{\partial ( t , y )} \vert_{t = s , \ y = 0} \Big\vert} e^{- s \frac{\lambda_{2} - \lambda_{1}}{2}} ,
\end{aligned}
\end{equation*}
exist and belong to $] 0 , + \infty [$. With these notations,
\begin{equation}
\begin{aligned}
B_{k} &= \sqrt{\frac{\lambda_{1}}{2 \pi}} \frac{\CM_{k}^{+}}{\CM_{k}^{-}} e^{- \frac{\pi}{2} ( \nu_{k} + \frac{1}{2} ) i} \big\vert g_{-}^{k} \big\vert \big( i \lambda_{1} \vert g_{+}^{k} \vert \vert g_{-}^{k} \vert \big)^{- \frac{\lambda_{1} + \lambda_{2}}{2 \lambda_{1}}} , \\
T_{k} &= \frac{\ln ( \lambda_{1} \vert g_{+}^{k} \vert \vert g_{-}^{k} \vert )}{\lambda_{1}} .
\end{aligned}
\end{equation}
Note that $B_{k} \in \C \setminus \{ 0 \}$ and $T_{k} \in \R$.

The idea is to find a geometric situation and a set $\SH \subset ] 0 , 1 ]$ with $0 \in \overline{\SH}$ such that
\begin{equation} \label{a6}
\mu ( \tau , h ) = 0 ,
\end{equation}
for all $\tau \in \C$ and $h \in \SH$. For simplicity, we take in the sequel $K = 3$ as in Figure \ref{f3} or \ref{f5} and assume that the trajectories $\gamma_{1}$ and $\gamma_{3}$ are symmetric. In particular, $A_{1} = A_{3}$, $B_{1} = B_{3}$ and $T_{1} = T_{3}$. We consider two situations:

\underline{Case (I):} $A_{1} \neq A_{2}$ (say $A_{2} > A_{1}$), $2 B_{1} = B_{2} e^{i \nu}$, $\nu \in \R$, and $T_{1} = T_{2}$. Using \eqref{a5} and the symmetry of $\gamma_{1}$ and $\gamma_{3}$, these relations imply that \eqref{a6} holds true with
\begin{equation*}
\SH = \Big\{ \frac{A_{2} - A_{1}}{( 2 j + 1 ) \pi + \nu} ; \ j \in \N \Big\} .
\end{equation*}
The required relations can be realized since $T_{2}$ is only given by the potential $V$ on the line $\R \times \{ 0 \}$ if $\partial_{x_{2}} V ( x_{1} , 0 ) = 0$ for all $x_{1} \in \R$, whereas $B_{2}$ is given by $\partial_{x_{2}}^{2} V$ on $\R \times \{ 0 \}$. If $V_{\rm ref}$ is replaced by an obstacle $\CO$, one may need an additional potential $V_{\rm add}$ in order to satisfy these relations (see Figure \ref{f5}).

\underline{Case (II):} $A_{1} = A_{2}$, $2 B_{1} = - B_{2}$ and $T_{1} = T_{2}$. In this setting, \eqref{a6} holds true with $\SH = ] 0 , 1 ]$. These relations can been obtained as before. More precisely, one can adjust $V_{\rm ref}$ on $\R \times \{ 0 \}$ with $\partial_{x_{2}} V = 0$ on $\R \times \{ 0 \}$ in order to have $A_{1} = A_{2}$ and $T_{1} = T_{2}$. Then, playing on the Maslov index and on $\partial_{x_{2}}^{2} V_{\rm ref}$ on $\R \times \{ 0 \}$, one can obtain $2 B_{1} = - B_{2}$.

Adding an absorbing potential $- i h \vert \ln h \vert V_{\rm abs}$, with $V_{\rm abs} \geq 0$, it is possible to artificially remove a homoclinic trajectory and thus to work with only $K = 2$ trajectories (see Remark 2.1 $ii)$ and Example 4.14 of \cite{BoFuRaZe18_01}). The resulting operator will be non self-adjoint (dissipative) but the conclusions of Theorem \ref{a1} will still hold.

For the perturbation $W$, we take any non-negative $C^{\infty}_{0} ( \R^{2} ; \R )$ function supported away from the support of $V$ and non-zero on the base space projection of only one homoclinic trajectory. In the sequel, we will assume that this trajectory is $\gamma_{1}$ as in Figures \ref{f3} and \ref{f5}. We assume that $W = 0$ near the support of $V$ only to simplify the discussion. The same way, $W \geq 0$ and $W$ non-zero on $\pi_{x} ( \gamma_{1} )$ can be weakened to $\int_{\R} W ( x_{1} ( t ) ) \, d t \neq 0$. Finally, $W = c_{1} W_{1} + c_{2} W_{2} + c_{3} W_{3}$, with $W_{j}$ non-zero only on $\gamma_{j}$, may be suitable for Theorem \ref{a1} generically with respect to $c_{j} \in \R$.

\section{Proof of the spectral instability} \label{s3}

We consider the operators constructed in the previous section. In particular, we work in dimension $n = 2$, the trapped set of energy $E_{0} > 0$ has $K = 3$ homoclinic trajectories and \eqref{a6} holds true for $h \in\SH$. Following Chapter 4 of \cite{BoFuRaZe18_01}, the resonances of $P$ closest to the real axis are given by the $3 \times 3$ matrix $\CQ$ whose entries are
\begin{equation} \label{a7}
\CQ_{k , \ell} ( z , h ) = e^{i A_{k} / h} \Gamma \big( S ( z , h ) / \lambda_{1} \big) \sqrt{\frac{\lambda_{1}}{2 \pi}} \frac{\CM_{k}^{+}}{\CM_{k}^{-}} e^{- \frac{\pi}{2} ( \nu_{k} + \frac{1}{2} ) i} \big\vert g_{-}^{\ell} \big\vert \big( i \lambda_{1} \vert g_{+}^{k} \vert \vert g_{-}^{\ell} \vert \big)^{- S ( z , h ) / \lambda_{1}} ,
\end{equation}
with rescaled spectral parameter
\begin{equation} \label{d5}
S ( z , h ) = \frac{\lambda_{1} + \lambda_{2}}{2} - i \frac{z - E_{0}}{h} .
\end{equation}
The same way, the entries of the corresponding matrix for $\widetilde{P} = P + h^{1 + \delta} W$ are
\begin{equation} \label{a8}
\widetilde{\CQ}_{k , \ell} ( z , h ) = \left\{
\begin{aligned}
&e^{- i w h^{\delta}} \CQ_{k , \ell} ( z , h )  &&\text{ if } k = 1 , \\
&\CQ_{k , \ell} ( z , h )  &&\text{ if } k \neq 1 ,
\end{aligned} \right.
\end{equation}
with the notation $w = \int_{\R} W ( x_{1} ( t ) ) \, d t \neq 0$.

\begin{lemma}\sl \label{a16}
The matrices $\CQ$ and $\widetilde{\CQ}$ are of rank one with non-zero entries. Moreover, $\CQ^{2} ( z , h ) = 0$ for all $z \in \C$ and $h \in \SH$.
\end{lemma}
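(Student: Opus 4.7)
The plan is to exploit the multiplicative structure of the entries in \eqref{a7}. First I observe that, thanks to $(i\lambda_{1} |g_+^k| |g_-^\ell|)^{-S/\lambda_1} = (i\lambda_1)^{-S/\lambda_1} |g_+^k|^{-S/\lambda_1} |g_-^\ell|^{-S/\lambda_1}$, the dependencies on $k$ and $\ell$ in $\CQ_{k,\ell}$ completely separate. Setting
\begin{equation*}
\alpha_k = e^{iA_k/h} \frac{\CM_k^+}{\CM_k^-} e^{-\frac{\pi}{2}(\nu_k+\frac{1}{2})i} |g_+^k|^{-S(z,h)/\lambda_1}, \qquad \beta_\ell = \Gamma\bigl(S/\lambda_1\bigr) \sqrt{\frac{\lambda_1}{2\pi}} (i\lambda_1)^{-S/\lambda_1} |g_-^\ell|^{1 - S/\lambda_1},
\end{equation*}
one has $\CQ_{k,\ell} = \alpha_k \beta_\ell$, so $\CQ = u v^{\top}$ with $u_k = \alpha_k$ and $v_\ell = \beta_\ell$ and in particular $\CQ$ has rank at most one. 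Each factor is clearly non-zero: the actions $A_k$ are real, the quantities $\CM_k^{\pm}$ belong to $(0,+\infty)$, $|g_\pm^k| \neq 0$, and $\Gamma$ has no zeros. For $\widetilde{\CQ}$, the modification in \eqref{a8} only multiplies the first row by the non-vanishing scalar $e^{-iwh^\delta}$, so the rank-one structure and the non-vanishing of every entry persist.

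To establish $\CQ^2 = 0$ on $\SH$, I rely on the elementary identity $(uv^{\top})^2 = (v^{\top}u)\, uv^{\top} = \tr(uv^{\top})\cdot uv^{\top}$ valid for any rank-one matrix; it therefore suffices to show $\tr \CQ(z,h) = 0$ for $h \in \SH$ and all $z \in \C$. The key algebraic step is the identification $\tr \CQ(z,h) = \mu(\tau, h)$ with $\tau = (z-E_0)/h$. Using the decomposition $-S(z,h)/\lambda_1 = -(\lambda_1+\lambda_2)/(2\lambda_1) + i\tau/\lambda_1$ together with the principal-branch evaluation $i^{i\tau/\lambda_1} = e^{-\pi\tau/(2\lambda_1)}$ and the definition of $T_k$, I rewrite
\begin{equation*}
(i\lambda_1 |g_+^k| |g_-^k|)^{-S/\lambda_1} = (i\lambda_1 |g_+^k| |g_-^k|)^{-\frac{\lambda_1+\lambda_2}{2\lambda_1}} \, e^{-\pi\tau/(2\lambda_1)} \, e^{iT_k \tau}.
\end{equation*}
Substituting this into \eqref{a7} with $\ell=k$ and comparing with the definition of $B_k$, the diagonal entry collapses to $\CQ_{k,k} = \Gamma(S/\lambda_1)\, e^{-\pi\tau/(2\lambda_1)}\, e^{iA_k/h}\, B_k\, e^{iT_k\tau}$, and summing over $k=1,\ldots,K$ reproduces $\mu(\tau,h)$ exactly. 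The hypothesis \eqref{a6} then gives $\tr \CQ \equiv 0$ on $\C \times \SH$, whence $\CQ^2 = 0$ on the same set.

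The only delicate point is to make sure that the branch convention used in \eqref{a7} to define the complex power $(i\lambda_1 |g_+^k| |g_-^k|)^{-S/\lambda_1}$ coincides with the one implicitly adopted in Section \ref{s2} in the formula for $B_k$, so that the extraction of the $B_k$-factor and of the exponentials $e^{-\pi\tau/(2\lambda_1)}e^{iT_k\tau}$ is unambiguous. Beyond this bookkeeping, the proof is purely algebraic and the three assertions of the lemma follow at once.
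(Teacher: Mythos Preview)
Your proof is correct and follows essentially the same approach as the paper: factor $\CQ_{k,\ell}=\alpha_k\beta_\ell$ to get rank one, identify $\tr\CQ$ with $\mu((z-E_0)/h,h)$, and use \eqref{a6} together with the rank-one identity $(uv^{\top})^{2}=(v^{\top}u)\,uv^{\top}$ to conclude $\CQ^{2}=0$. The paper phrases the last step as ``$\CQ$ nilpotent iff $(\beta,\alpha)=0$ iff $\CQ^{2}=0$'', which is the same computation; your explicit verification that $\tr\CQ=\mu$ and your remark on the branch convention are welcome additions but do not change the argument.
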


\begin{proof}
Since $\CM^{\pm}_{\bullet} \neq 0$ and $g_{\pm}^{\bullet} \neq 0$, the entries of $\CQ$ and $\widetilde{\CQ}$ are always non-zero. From \eqref{a7}, the entries of $\CQ$ can be written $\CQ_{k , \ell} = \alpha_{k} \beta_{\ell}$ for some $\alpha_{k} , \beta_{k} \in \C \setminus \{ 0 \}$. In particular, $\CQ = \alpha ( \beta , \cdot )$ with $\alpha , \beta \in \C^{3} \setminus \{ 0 \}$ and $\CQ$ is of rank one (the same thing for $\widetilde{\CQ}$). Thus, $0$ is an eigenvalue of $\CQ$ of multiplicity at least $2$ and the last eigenvalue is given by its trace, that is
\begin{align*}
\tr ( \CQ ) &= \Gamma \big( S ( z , h ) / \lambda_{1} \big) \sum_{k = 1}^{3} e^{i A_{k} / h} \sqrt{\frac{\lambda_{1}}{2 \pi}} \frac{\CM_{k}^{+}}{\CM_{k}^{-}} e^{- \frac{\pi}{2} ( \nu_{k} + \frac{1}{2} ) i} \big\vert g_{-}^{k} \big\vert \big( i \lambda_{1} \vert g_{+}^{k} \vert \vert g_{-}^{k} \vert \big)^{- \frac{\lambda_{1} + \lambda_{2}}{2 \lambda_{1}} + i \frac{z - E_{0}}{\lambda_{1} h}}  \\
&= \mu \Big( \frac{z - E_{0}}{h} , h \Big) .
\end{align*}
For $h \in \SH$, all the eigenvalues of $\CQ$ are zero from \eqref{a6} and $\CQ$ is nilpotent. Since $\CQ^{j} = \alpha ( \beta , \alpha )^{j - 1} ( \beta , \cdot )$, $\CQ$ is nilpotent iff $( \beta , \alpha ) = 0$ iff $\CQ^{2} = 0$.
\end{proof}

Let $\CW$ be the $3 \times 3$ diagonal matrix $\CW = \diag ( - i w , 0 , 0 )$. The eigenvalues of $\CW \CQ ( z , h )$ are
\begin{equation} \label{a9}
- i w \CQ_{1 , 1} ( z , h ) , 0 , 0 .
\end{equation}
In the present setting, the quantization rule for $\widetilde{P}$ takes the following form: we say that $z$ is a pseudo-resonance of $\widetilde{P}$ when
\begin{equation} \label{a10}
1 \in \spe \big( h^{S ( z , h ) / \lambda_{1} - 1 / 2 + \delta} \CW \CQ ( z , h ) \big) .
\end{equation}
The set of pseudo-resonances is denoted by $\res_{0} ( \widetilde{P} )$. Since \eqref{a10} is similar to Definition 4.2 of \cite{BoFuRaZe18_01}, we can adapt Proposition 4.3 and Lemma 11.3 of \cite{BoFuRaZe18_01} in our case and obtain the following asymptotic of the pseudo-resonances.

\begin{lemma}\sl \label{a11}
Let $0 < \delta < 1 / 2$, $C , \beta > 0$ and $\varepsilon ( h )$ be a function which goes to $0$ as $h \to 0$. Then, uniformly for $\tau \in [ - C , C ]$, the pseudo-resonances $z$ of $\widetilde{P}$ in
\begin{equation} \label{a12}
E_{0} + [ - C h , C h ] + i \Big[ - \Big( \frac{\lambda_{2}}{2} + \delta \lambda_{1} \Big) h - C \frac{h}{\vert \ln h \vert} , h \Big] ,
\end{equation}
with $\re z \in E_{0} + \tau h + h \varepsilon ( h ) [ - 1 , 1 ]$ satisfy $z = z_{q} ( \tau ) + o ( h \vert \ln h \vert^{- 1} )$ with
\begin{equation}  \label{a13}
z_{q} ( \tau ) = E_{0} - \frac{A_{1} \lambda_{1}}{\vert \ln h \vert} + 2 q \pi \lambda_{1} \frac{h}{\vert \ln h \vert} - i h \Big( \frac{\lambda_{2}}{2} + \delta \lambda_{1} \Big) + i \ln ( \widetilde{\mu} ( \tau ) ) \lambda_{1} \frac{h}{\vert \ln h \vert} ,
\end{equation}
and
\begin{equation*}
\widetilde{\mu} ( \tau ) = w \Gamma \Big( \frac{1}{2} - \delta - i \frac{\tau}{\lambda_{1}} \Big) \sqrt{\frac{\lambda_{1}}{2 \pi}} \frac{\CM_{1}^{+}}{\CM_{1}^{-}} e^{- \frac{\pi}{2} ( \nu_{1} + \frac{3}{2} ) i} \big\vert g_{-}^{1} \big\vert \big( i \lambda_{1} \vert g_{+}^{1} \vert \vert g_{-}^{1} \vert \big)^{- \frac{1}{2} + \delta+ i \frac{\tau}{\lambda_{1}}} ,
\end{equation*}
for some $q \in \Z$. On the other hand, for each $\tau \in [ - C , C ]$ and $q \in \Z$ such that $z_{q} ( \tau )$ belongs to \eqref{a12} with a real part lying in $E_{0} + \tau h + h \varepsilon ( h ) [ - 1 , 1 ]$, there exists a pseudo-resonance $z$ satisfying $z = z_{q} ( \tau ) + o ( h \vert \ln h \vert^{- 1} )$ uniformly with respect to $q , \tau$. Moreover, there exists $M > 0$ such that, for all $z \in \eqref{a12}$, we have
\begin{equation} \label{a14}
\dist \big( z , \res_{0} ( \widetilde{P} ) \big) > \beta \frac{h}{\vert \ln h \vert} \quad \Longrightarrow \quad \big\Vert \big( 1 - h^{S / \lambda_{1} - 1 / 2 + \delta} \CW \CQ \big)^{- 1} \big\Vert \leq M .
\end{equation}
\end{lemma}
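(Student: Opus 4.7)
The plan is to exploit the rank-one structure from Lemma~\ref{a16} to reduce everything to a scalar transcendental equation, which can then be inverted by a standard fixed-point argument as in the proof of Proposition 4.3 of \cite{BoFuRaZe18_01}. First, since $\CW \CQ$ has rank one with eigenvalues as in \eqref{a9}, the spectral condition \eqref{a10} is equivalent to the scalar equation
\begin{equation*}
-iw \, h^{S(z,h)/\lambda_{1} - 1/2 + \delta} \, \CQ_{1,1}(z,h) = 1 .
\end{equation*}
Substituting the explicit expression \eqref{a7} for $\CQ_{1,1}$, introducing $\tau = (z - E_{0})/h$, and taking logarithms, this is equivalent to
\begin{equation*}
i \frac{\tau}{\lambda_{1}} \vert \ln h \vert - i \frac{A_{1}}{h} + \Big( \frac{\lambda_{2}}{2 \lambda_{1}} + \delta \Big) \ln h + \log R(\tau, h) = 2 \pi i q , \qquad q \in \Z ,
\end{equation*}
where $R(\tau,h)$ gathers the bounded analytic factors (the Gamma function and the power of $i\lambda_{1}\vert g_{+}^{1}\vert\vert g_{-}^{1}\vert$), holomorphic and non-vanishing in a neighbourhood of $\tau \in [-C,C] + i[-\lambda_{2}/2 - \delta\lambda_{1} - C/\vert\ln h\vert, 1]$.

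Next, I would solve for $\tau$ by isolating the dominant term $i\tau|\ln h|/\lambda_{1}$: division by $i\vert\ln h\vert/\lambda_{1}$ and multiplication by $h$ yields a formal fixed-point equation for $z = E_{0} + \tau h$ whose leading-order solution reads
\begin{equation*}
z \approx E_{0} - \frac{A_{1} \lambda_{1}}{\vert \ln h \vert} + 2 q \pi \lambda_{1} \frac{h}{\vert \ln h \vert} - i h \Big( \frac{\lambda_{2}}{2} + \delta \lambda_{1} \Big) .
\end{equation*}
Re-inserting this approximation into $\log R(\tau,h)$, and noticing that at this value $\im \tau \approx -\lambda_{2}/2 - \delta \lambda_{1}$ makes the Gamma argument $S/\lambda_{1}$ collapse to $1/2 - \delta - i \tau/\lambda_{1}$ (with $\tau$ now effectively real), one recovers precisely the quantity $\widetilde{\mu}(\tau)$ defined in the statement. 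To make this rigorous, on a disk of radius $o(h/\vert \ln h\vert)$ around each $z_{q}(\tau)$ one checks that the derivative of the left-hand side in $z$ is $\sim i \vert \ln h \vert/(\lambda_{1} h)$, so a rescaling turns the equation into a contraction, proving both existence and the asymptotic $z = z_{q}(\tau) + o(h\vert\ln h\vert^{-1})$; conversely, any pseudo-resonance in \eqref{a12} whose real part lies in $E_{0} + \tau h + h\varepsilon(h)[-1,1]$ must lie in such a disk, because the range of $\tau$ and the dominance of $i\tau\vert \ln h\vert/\lambda_{1}$ uniquely pin down $q$.

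For the resolvent bound \eqref{a14}, I would use the Sherman--Morrison formula: writing $\CW \CQ = \alpha \langle \beta, \cdot \rangle$ (rank one) and $c = h^{S/\lambda_{1} - 1/2 + \delta}$, one has
\begin{equation*}
\big( 1 - c \, \CW \CQ \big)^{-1} = 1 + \frac{c \, \CW \CQ}{1 + i w c \, \CQ_{1,1}} .
\end{equation*}
The numerator is polynomially bounded in $h^{-1}$, so it suffices to bound the denominator from below. But $1 + i w c(z) \CQ_{1,1}(z,h)$ is exactly the scalar function whose zeros are the pseudo-resonances; a minimum-modulus argument on a slightly enlarged disk around each $z_{q}(\tau)$, using that its logarithmic derivative is of order $\vert \ln h \vert/h$ on such a disk, shows that the distance from the pseudo-resonance set being at least $\beta h / \vert \ln h \vert$ forces $\vert 1 + i w c \CQ_{1,1}\vert \gtrsim \beta$ uniformly, which gives \eqref{a14}.

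The main obstacle I expect lies in the uniformity of the fixed-point and minimum-modulus arguments. The integer $q$ must be allowed to grow like $A_{1}/(2\pi h)$ to compensate the $-A_{1}\lambda_{1}/\vert\ln h\vert$ drift, and simultaneously $\tau$ ranges over a compact interval while $\im \tau$ approaches the edge of analyticity of the Gamma factor in $S/\lambda_{1}$. One therefore has to track the Stirling-type estimates for $\Gamma(S/\lambda_{1})$ and the branch choices in $\log R$ carefully enough to certify that the combined remainder is genuinely $o(h/\vert\ln h\vert)$ uniformly in $(q,\tau)$ throughout the rectangle \eqref{a12}; this is exactly what the adaptation of Lemma 11.3 of \cite{BoFuRaZe18_01} supplies.
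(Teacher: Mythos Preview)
Your proposal is correct and follows essentially the same route as the paper, which simply invokes Proposition~4.3 and Lemma~11.3 of \cite{BoFuRaZe18_01}; you have unpacked what that adaptation amounts to---the rank-one reduction via \eqref{a9} to the scalar equation $-i w\,h^{S/\lambda_{1}-1/2+\delta}\CQ_{1,1}=1$, its logarithmic inversion by a fixed-point argument, and the Sherman--Morrison identity for \eqref{a14}. One small slip: in your displayed log-equation the action term should read $+\,iA_{1}/h$ (coming from $e^{iA_{1}/h}$ in \eqref{a7}), which is indeed what produces the $-A_{1}\lambda_{1}/\vert\ln h\vert$ in \eqref{a13} after solving for $\tau$; your subsequent leading-order formula is nonetheless correct.
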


In the lemma, we have used that the eigenvalues of $\CW \CQ$ are explicitly given by \eqref{a9} and that two of them are zero. On the contrary, note that $\widetilde{\mu} ( \tau )$ is a smooth function which does not vanish and that there are a lot of pseudo-resonances in \eqref{a12}. The assumption $0 < \delta < 1 / 2$ allows to avoid the poles of the $\Gamma$ function. This result implies the following resolvent estimates at the classical level.

\begin{lemma}\sl \label{a15}
For all $0 < \delta < 1 / 2$, $\nu = \lambda_{1} / 4 + \lambda_{2} / 2$ and $C , \beta > 0$, the following properties are satisfied for $h \in \CH$ small enough.

$i)$ For all $z \in E_{0} + [ - C h , C h ] + i [ - \nu h , h ]$, we have
\begin{equation} \label{a17}
\big\Vert \big( 1 - h^{S / \lambda_{1} - 1 / 2} \CQ \big)^{- 1} \big\Vert \lesssim \max \big( 1 , h^{\frac{\lambda_{2}}{2 \lambda_{1}} + \frac{\im z}{\lambda_{1} h}} \big) .
\end{equation}

$ii)$ For all $z \in \eqref{a12}$ with $\dist (z , \res_{0} ( \widetilde{P} ) ) > \beta h \vert \ln h \vert^{- 1}$, we have
\begin{equation} \label{a18}
\big\Vert \big( 1 - h^{S / \lambda_{1} - 1 / 2} \widetilde{\CQ} \big)^{- 1} \big\Vert \lesssim h^{- \delta} .
\end{equation}
\end{lemma}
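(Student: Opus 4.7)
The plan is to combine the rank-one/nilpotent structure of $\CQ$ and $\widetilde\CQ$ from Lemma \ref{a16} with the resolvent estimate \eqref{a14} of Lemma \ref{a11}, exploiting the Sherman--Morrison formula.

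For $i)$, the identity $\CQ^{2} = 0$ on $\SH$ terminates the Neumann series and yields the closed form
$$
\bigl( 1 - h^{S / \lambda_{1} - 1 / 2} \CQ \bigr)^{- 1} = 1 + h^{S / \lambda_{1} - 1 / 2} \CQ .
$$
Taking operator norms then gives \eqref{a17}: from \eqref{a7}, the entries of $\CQ$ are controlled by $\vert \Gamma ( S / \lambda_{1} ) \vert$, which is uniformly bounded since $\re ( S / \lambda_{1} ) = \tfrac{1}{2} + \tfrac{\lambda_{2}}{2 \lambda_{1}} + \tfrac{\im z}{\lambda_{1} h} \geq \tfrac{1}{4}$ in the prescribed region, while $\vert h^{S / \lambda_{1} - 1 / 2} \vert = h^{\lambda_{2} / ( 2 \lambda_{1} ) + \im z / ( \lambda_{1} h )}$ is exactly the dominant factor on the right-hand side.

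For $ii)$, I would write $\widetilde\CQ = \tilde\alpha \beta^{T}$ (still rank one by Lemma \ref{a16}, with $\tilde\alpha_{1} = e^{- i w h^{\delta}} \alpha_{1}$ and $\tilde\alpha_{k} = \alpha_{k}$ for $k \neq 1$) and apply Sherman--Morrison:
$$
\bigl( 1 - h^{S / \lambda_{1} - 1 / 2} \widetilde\CQ \bigr)^{- 1} = 1 + \frac{h^{S / \lambda_{1} - 1 / 2} \widetilde\CQ}{1 - h^{S / \lambda_{1} - 1 / 2} \tr ( \widetilde\CQ )} .
$$
Since $\tr ( \CQ ) = \mu ( ( z - E_{0} ) / h , h ) = 0$ on $\SH$ (Lemma \ref{a16}), a direct comparison of \eqref{a7} with \eqref{a8} reduces the trace to $\tr ( \widetilde\CQ ) = ( e^{- i w h^{\delta}} - 1 ) \CQ_{1 , 1}$, and a Taylor expansion of the exponential gives
$$
h^{S / \lambda_{1} - 1 / 2} \tr ( \widetilde\CQ ) = \rho \bigl( 1 + \CO ( h^{\delta} ) \bigr), \qquad \rho := - i w \, h^{S / \lambda_{1} - 1 / 2 + \delta} \CQ_{1 , 1},
$$
with $\rho$ exactly the unique non-zero eigenvalue \eqref{a9} of $R := h^{S / \lambda_{1} - 1 / 2 + \delta} \CW \CQ$. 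The numerator is easy to control: $\Vert \widetilde\CQ \Vert \lesssim 1$ and $\vert h^{S / \lambda_{1} - 1 / 2} \vert \lesssim h^{- \delta}$ throughout \eqref{a12}.

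The main work will be to show $\vert 1 - h^{S / \lambda_{1} - 1 / 2} \tr ( \widetilde\CQ ) \vert \gtrsim 1$ away from the pseudo-resonances. A second application of Sherman--Morrison to the rank-one matrix $R$ converts \eqref{a14} into the quantitative bound $\vert 1 - \rho \vert \geq \Vert R \Vert / ( M + 1 )$; since $\vert \rho \vert \leq \Vert R \Vert$ for any rank-one matrix, a dichotomy on whether $\vert \rho \vert \leq 1 / 2$ gives $\vert 1 - \rho \vert$ bounded below by a positive constant in both cases, and the $\CO ( h^{\delta} ) \rho$ correction is then absorbed for $h$ small enough. The key subtlety is that this $h$-independent lower bound on $\vert 1 - \rho \vert$ hinges on the \emph{exact} vanishing $\tr ( \CQ ) \equiv 0$ on $\SH$ coming from \eqref{a6}: with only an approximate cancellation, one would be left with $\vert 1 - \tr ( \widetilde\CQ ) \vert \gtrsim h^{\delta}$ and forfeit the sharp $h^{- \delta}$ bound in \eqref{a18}.
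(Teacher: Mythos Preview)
Your argument is correct. Part $i)$ is identical to the paper's proof. For part $ii)$ you take a genuinely different route: the paper exploits $\CQ^{2}=0$ to factor
\[
1 - h^{S/\lambda_{1}-1/2}\widetilde{\CQ}
= \bigl(1+\CO(h^{\delta})\bigr)\bigl(1 - h^{S/\lambda_{1}-1/2+\delta}\CW\CQ\bigr)\bigl(1 - h^{S/\lambda_{1}-1/2}\CQ\bigr),
\]
and then inverts each factor separately using \eqref{a14} and \eqref{a32}; you instead invoke Sherman--Morrison on the rank-one $\widetilde{\CQ}$ and reduce everything to a scalar lower bound on $\lvert 1-\rho\rvert$, which you extract from \eqref{a14} via a second Sherman--Morrison and the dichotomy on $\lvert\rho\rvert$. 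Your path is arguably more elementary and makes the role of the single nonzero eigenvalue $\rho$ very transparent. The paper's factorization, on the other hand, produces the explicit identity \eqref{a29},
\[
\bigl(1 - h^{S/\lambda_{1}-1/2}\widetilde{\CQ}\bigr)^{-1}
= \bigl(1 + h^{S/\lambda_{1}-1/2}\CQ\bigr)\bigl(1 - h^{S/\lambda_{1}-1/2+\delta}\CW\CQ\bigr)^{-1} + \CO(1),
\]
which is reused verbatim in the proof of Lemma \ref{a26} to compute the contour integral $\SI$. Your Sherman--Morrison formula is equally explicit and could serve the same purpose there, but you would have to redo that step in a slightly different form.
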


The particular value of $\nu$ in Lemma \ref{a15} has no particular meaning. We only need $\nu > D_{0} = \lambda_{2} / 2$ for Theorem \ref{a1} and $\nu < \lambda_{1} / 2 + \lambda_{2} / 2$ to avoid the poles of $\Gamma$.

\begin{proof}
Since $\CQ^{2} = 0$ by Lemma \ref{a16}, we get
\begin{equation} \label{a32}
\big( 1 - h^{S / \lambda_{1} - 1 / 2} \CQ \big)^{- 1} = 1 + h^{S / \lambda_{1} - 1 / 2} \CQ .
\end{equation}
Using that $\vert h^{S / \lambda_{1} - 1 / 2} \vert = h^{\frac{\lambda_{2}}{2 \lambda_{1}} + \frac{\im z}{\lambda_{1} h}}$ and that $\CQ ( z , h )$ is uniformly bounded for $z \in E_{0} + [ - C h , C h ] + i [ - \nu h , h ]$, this identity yields \eqref{a17}.

On the other hand, \eqref{a8}, $e^{- i w h^{\delta}} = 1 - i w h^{\delta} + \CO ( h^{2 \delta} )$ and $\CQ^{2} = 0$ give
\begin{align}
1 - h^{S / \lambda_{1} - 1 / 2} \widetilde{\CQ} &= 1 - h^{S / \lambda_{1} - 1 / 2} \CQ - h^{S / \lambda_{1} - 1 / 2 + \delta} \CW \CQ + \CO ( h^{\frac{\lambda_{2}}{2 \lambda_{1}} + \frac{\im z}{\lambda_{1} h} + 2 \delta} ) \CQ   \nonumber \\
&= \Big( 1 - h^{S / \lambda_{1} - 1 / 2 + \delta} \CW \CQ + \CO ( h^{\frac{\lambda_{2}}{2 \lambda_{1}} + \frac{\im z}{\lambda_{1} h} + 2 \delta} ) \CQ \Big) \big( 1 - h^{S / \lambda_{1} - 1 / 2} \CQ \big)  \nonumber \\
&= \Big( 1 + \CO ( h^{\frac{\lambda_{2}}{2 \lambda_{1}} + \frac{\im z}{\lambda_{1} h} + 2 \delta} ) \big( 1 - h^{S / \lambda_{1} - 1 / 2 + \delta} \CW \CQ \big)^{- 1} \Big) \nonumber \\
&\qquad \qquad \qquad \qquad \big( 1 - h^{S / \lambda_{1} - 1 / 2 + \delta} \CW \CQ \big) \big( 1 - h^{S / \lambda_{1} - 1 / 2} \CQ \big) .   \label{a19}
\end{align}
We have $\vert h^{S / \lambda_{1} - 1 / 2} \vert = h^{\frac{\lambda_{2}}{2 \lambda_{1}} + \frac{\im z}{\lambda_{1} h}} \leq h^{- \delta}$ for $z \in \eqref{a12}$. Combining these estimates with \eqref{a14}, \eqref{a17} and \eqref{a32}, \eqref{a19} implies
\begin{align}
\big( 1 - h^{S / \lambda_{1} - 1 / 2} \widetilde{\CQ} \big)^{- 1} &= \big( 1 - h^{S / \lambda_{1} - 1 / 2} \CQ \big)^{- 1} \big( 1 - h^{S / \lambda_{1} - 1 / 2 + \delta} \CW \CQ \big)^{- 1} \big( 1 + \CO ( h^{\delta} ) \big)^{- 1}  \nonumber \\
&= \big( 1 + h^{S / \lambda_{1} - 1 / 2} \CQ \big) \big( 1 - h^{S / \lambda_{1} - 1 / 2 + \delta} \CW \CQ \big)^{- 1} + \CO ( 1 ) ,   \label{a29}
\end{align}
if $\dist (z , \res_{0} ( \widetilde{P} ) ) > \beta h \vert \ln h \vert^{- 1}$. Then \eqref{a18} follows.
\end{proof}

The next result provides a resonance free region for $P$ and the asymptotic of the resonances closest to the real axis for $\widetilde{P} = P + h^{1 + \delta} W$. Combined with Lemma \ref{a11}, it implies directly Theorem \ref{a1} with $D_{0} = \lambda_{2} / 2$ if we choose $\lambda_{1} = 1$.

\begin{lemma}\sl \label{a20}
There exists $\alpha > 0$ such that, for all $\delta > 0$ small enough and $C > 0$, the following properties hold for $h \in \SH$ small enough.

$i)$ $P$ has no resonance in
\begin{equation*}
E_{0} + [ - C h , C h ] + i \Big[ - \Big( \frac{\lambda_{2}}{2} + \alpha \Big) h , h \Big] .
\end{equation*}

$ii)$ In the domain \eqref{a12}, we have
\begin{equation*}
\dist \big( \res ( \widetilde{P} ) , \res_{0} ( \widetilde{P} ) \big) = o \Big( \frac {h}{\vert \ln h \vert} \Big) .
\end{equation*}
\end{lemma}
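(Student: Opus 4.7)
The plan is to use the resonance reduction from Chapter~4 of \cite{BoFuRaZe18_01}, which rewrites the resonance equation for $P$ (resp.\ for $\widetilde P$) in the spectral window considered here as the vanishing of a holomorphic determinant of the form
\begin{equation*}
\det\!\big( I - h^{S ( z , h ) / \lambda_{1} - 1 / 2} \CQ ( z , h ) + R ( z , h ) \big) = 0 \qquad \Big( \text{resp. } \det\!\big( I - h^{S / \lambda_{1} - 1 / 2} \widetilde\CQ + \widetilde R \big) = 0 \Big),
\end{equation*}
where the $3 \times 3$ matrix-valued remainders $R , \widetilde R$ are holomorphic in $z$ and of size $\CO ( h^{\epsilon_{0}} )$ for some fixed $\epsilon_{0} > 0$. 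Once this reduction is in hand, both parts of the lemma reduce to a Neumann series argument based on \eqref{a17} and \eqref{a18}.

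For part~$i)$, I fix $\alpha \in \big( 0 , \min ( \lambda_{1} / 4 , \epsilon_{0} \lambda_{1} / 2 ) \big)$. On the rectangle $E_{0} + [ - C h , C h ] + i [ - ( \lambda_{2} / 2 + \alpha ) h , h ]$, the worst value of the exponent appearing in \eqref{a17} is $- \alpha / \lambda_{1}$, so $\| ( I - h^{S / \lambda_{1} - 1 / 2} \CQ )^{- 1} \| \lesssim h^{- \alpha / \lambda_{1}}$. Combined with $\| R \| = \CO ( h^{\epsilon_{0}} )$ and $\epsilon_{0} > 2 \alpha / \lambda_{1}$, a Neumann series shows that $I - h^{S / \lambda_{1} - 1 / 2} \CQ + R$ remains invertible on this rectangle for $h \in \SH$ small enough, which rules out any resonance of $P$ there.

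For part~$ii)$, I work on the region \eqref{a12} outside a $\beta h / \vert \ln h \vert$-neighbourhood of $\res_{0} ( \widetilde P )$. Estimate \eqref{a18} gives $\| ( I - h^{S / \lambda_{1} - 1 / 2} \widetilde\CQ )^{- 1} \| \lesssim h^{- \delta}$. Choosing $\delta$ so small that $\epsilon_{0} > 2 \delta$, the same Neumann argument produces a bounded inverse for $I - h^{S / \lambda_{1} - 1 / 2} \widetilde\CQ + \widetilde R$, so that no resonance of $\widetilde P$ can lie at distance larger than $\beta h / \vert \ln h \vert$ from $\res_{0} ( \widetilde P )$. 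Letting $\beta \to 0$ yields the one-sided bound $\sup_{z \in \res ( \widetilde P )} \dist ( z , \res_{0} ( \widetilde P ) ) = o ( h / \vert \ln h \vert )$. To obtain the reciprocal inclusion (which is what will feed Theorem~\ref{a1}~$ii)$), I would run a Rouch{\'e} argument on discs of radius $\beta h / \vert \ln h \vert$ centred at each pseudo-resonance: Lemma~\ref{a11} produces a simple zero of $\det ( I - h^{S / \lambda_{1} - 1 / 2} \widetilde\CQ )$ inside such a disc, and the smallness of $\widetilde R$ compared with the $h^{- \delta}$ resolvent bound on its boundary transfers that zero to the full determinantal equation.

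\emph{The main obstacle} is that both \eqref{a17} and \eqref{a18} rest on the nilpotency $\CQ^{2} = 0$ from Lemma~\ref{a16}, itself a consequence of the cancellation \eqref{a6} valid only on $\SH$. Without this nilpotency the inverse of $I - h^{S / \lambda_{1} - 1 / 2} \CQ$ in the unphysical sheet grows typically much faster than $h^{- \alpha / \lambda_{1}}$, which would destroy the Neumann perturbation and, in particular, prevent any enlargement of the resonance free region in part~$i)$. The technical heart of the argument is therefore to track the precise exponent $\epsilon_{0}$ hidden in the remainders $R$ and $\widetilde R$ of \cite{BoFuRaZe18_01} and to verify that $\alpha$ and $\delta$ can be chosen strictly below $\epsilon_{0} \lambda_{1} / 2$ and $\epsilon_{0} / 2$ respectively.
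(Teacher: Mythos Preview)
Your approach hinges on a determinantal reduction --- writing the resonance condition as $\det\big(I - h^{S/\lambda_1 - 1/2}\CQ + R\big) = 0$ with a holomorphic matrix remainder $R = \CO(h^{\epsilon_0})$ --- that is not available as a black box from \cite{BoFuRaZe18_01}. Chapter~4 of that monograph states the asymptotic results and introduces $\CQ$ as the leading quantization matrix, but the passage from resonances of the distorted operator to this $3\times 3$ object is carried out only through the microlocal machinery of Chapters~8--12, and that machinery does not hand you a finite-rank perturbation of the clean form you postulate. Your last paragraph effectively concedes this: tracking ``the precise exponent $\epsilon_0$ hidden in the remainders'' is not a bookkeeping step but the entire content of the proof.

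The paper proceeds differently. Part~$i)$ is quoted directly from Lemma~12.1 of \cite{BoFuRaZe18_01}. For part~$ii)$ it establishes two sub-lemmas at the PDE level. For the resonance-free statement (Lemma~\ref{a21}, adapting Proposition~11.4 of \cite{BoFuRaZe18_01}), one shows that any $L^2$-normalized solution of $(\widetilde P_\theta - z)u = \CO(h^\infty)$ with $z$ in \eqref{a22} vanishes microlocally near $K(E_0)$: the function $u$ is identified as a Lagrangian distribution along each $\gamma_k$, the transport equation along $\gamma_k$ acquires the extra term $i h^\delta W$ (producing the factor $e^{-i w h^\delta}$ in $\widetilde\CQ$), one obtains the symbol relation \eqref{a25}, and the inverse bound \eqref{a18} is fed into a bootstrap that improves the symbol class from $S(h^{-N})$ to $S(h^{-N+\zeta/2})$ and hence to $\CO(h^\infty)$. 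For the existence of a resonance near each pseudo-resonance (Lemma~\ref{a26}, adapting Proposition~11.6), one constructs a WKB test function $v$ localized on one incoming branch, sets $u = (\widetilde P_\theta - \widetilde z)^{-1} v$ on $\partial\CD$, and evaluates $\SI = (2\pi i)^{-1}\oint_{\partial\CD} u\,d\widetilde z$: holomorphy of the resolvent in $\CD$ would force $\SI=0$, whereas the explicit symbol computation (via \eqref{a29} and the fact that $\widetilde a(x_-,h)={}^{t}(1,0,0)$ is an eigenvector of $\CW\CQ$ for its nonzero eigenvalue) yields $\CW b(x_-,h)\sim i\lambda_1 h^{1-\delta}/\vert\ln h\vert\neq 0$, a contradiction.

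Your Neumann and Rouch\'e steps are the correct finite-dimensional shadows of these two arguments, but the collapse of the infinite-dimensional problem onto the $3\times 3$ matrix with a uniformly small remainder is precisely what the microlocal analysis supplies; it cannot be assumed up front.
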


As in Definition 4.4 of \cite{BoFuRaZe18_01}, the notation $\dist ( A , B ) \leq \varepsilon$ in $C$ means that
\begin{align*}
&\forall a \in A \cap C , \quad \exists b \in B , \qquad \vert a - b \vert \leq \varepsilon ,   \\
\text{and} \quad &\forall b \in B \cap C , \quad \exists a \in A , \qquad \vert a - b \vert \leq \varepsilon .
\end{align*}
The proof of Lemma \ref{a20} gives a polynomial estimate of the resolvents in the corresponding domains (at distance larger than $h \vert \ln h \vert^{- 1}$ from the pseudo-resonances of $\widetilde{P}$).

\begin{proof}
The first point of the lemma has already been obtained in Lemma 12.1 of \cite{BoFuRaZe18_01}. In order to show the second point, we follow the strategy of Chapters 11 and 12 of \cite{BoFuRaZe18_01} and summarized in the introduction of \cite{BoFuRaZe18_01}. Then, we first prove that $\widetilde{P}$ has no resonance and we show a polynomial estimate of its resolvent away from the pseudo-resonances.

\begin{lemma}\sl \label{a21}
For $\delta > 0$ small enough, $C , \beta > 0$ and $h \in \SH$ small enough, $\widetilde{P}$ has no resonance in the domain
\begin{equation} \label{a22}
E_{0} + [ - C h , C h ] + i \Big[ - \Big( \frac{\lambda_{2}}{2} + \delta \lambda_{1} \Big) h - C \frac{h}{\vert \ln h \vert} , h \Big] \setminus \Big( \res_{0} ( \widetilde{P} ) + B \Big( 0 , \beta \frac{h}{\vert \ln h \vert} \Big) \Big) ,
\end{equation}
and there exists $M > 0$ such that the distorted operator $\widetilde{P}_{\theta}$ of angle $\theta = h \vert \ln h \vert$ satisfies
\begin{equation*}
\big\Vert( \widetilde{P}_{\theta} -z )^{-1} \big\Vert \lesssim h^{- M} ,
\end{equation*}
uniformly for $h \in \SH$ small enough and $z \in \eqref{a22}$.
\end{lemma}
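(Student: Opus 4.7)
The plan is to follow the microlocal reduction developed in Chapters~11 and~12 of \cite{BoFuRaZe18_01}, which characterizes the resonances of a Schr\"{o}dinger operator whose trapped set at energy $E_{0}$ consists of a hyperbolic fixed point together with a finite number of homoclinic trajectories as the zeros of a finite-dimensional determinant. The machinery adapts directly to $\widetilde{P}$ because the perturbation $h^{1 + \delta} W$ is supported away from the fixed point and only modifies the transport along $\gamma_{1}$, an effect already absorbed into the definition \eqref{a8} of $\widetilde{\CQ}$.

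Concretely, the reduction produces a $3 \times 3$ matrix-valued function
\begin{equation*}
\CF ( z , h ) = 1 - h^{S ( z , h ) / \lambda_{1} - 1 / 2} \widetilde{\CQ} ( z , h ) + \CE ( z , h ) ,
\end{equation*}
defined for $z$ in the domain \eqref{a22}, such that $z \in \res ( \widetilde{P} )$ if and only if $\CF ( z , h )$ is not invertible. The error $\CE ( z , h )$ encodes the subprincipal contributions of the FIOs attached to $\Lambda_{\pm}$ near $( 0 , 0 )$, together with the transport errors along each homoclinic leg, and satisfies $\Vert \CE ( z , h ) \Vert = o ( h^{\delta} )$ uniformly on \eqref{a22}. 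Moreover, whenever $\CF ( z , h )$ is invertible, the distorted operator admits a Schur-type reconstruction
\begin{equation*}
\big\Vert ( \widetilde{P}_{\theta} - z )^{- 1} \big\Vert \lesssim h^{- M_{0}} \big\Vert \CF ( z , h )^{- 1} \big\Vert ,
\end{equation*}
for some $M_{0} > 0$ depending only on the order of the reduction.

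Granting this setup, the first step is to apply Lemma~\ref{a15}~$ii)$: on the subset of \eqref{a22} lying at distance at least $\beta h / \vert \ln h \vert$ from $\res_{0} ( \widetilde{P} )$, we have $\Vert ( 1 - h^{S / \lambda_{1} - 1 / 2} \widetilde{\CQ} )^{- 1} \Vert \lesssim h^{- \delta}$. Combining this estimate with $\Vert \CE \Vert = o ( h^{\delta} )$, a Neumann series argument yields $\Vert \CF ( z , h )^{- 1} \Vert \lesssim h^{- \delta}$, so that $\CF$ is invertible and $\widetilde{P}$ has no resonance on \eqref{a22}. Feeding the same bound into the Schur reconstruction gives the polynomial estimate $\Vert ( \widetilde{P}_{\theta} - z )^{- 1} \Vert \lesssim h^{- M_{0} - \delta}$, which proves both assertions with $M = M_{0} + \delta$.

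The main obstacle is the identification and control of the error $\CE ( z , h )$. One must verify that the perturbation $h^{1 + \delta} W$ enters the reduction only through the phase factor $e^{- i w h^{\delta}}$ on the first row of $\CQ$ already encoded in \eqref{a8}, and leaves the remaining $o ( h^{\delta} )$ error from \cite{BoFuRaZe18_01} unchanged at the required order. Since $W$ vanishes in a neighbourhood of the fixed point and of the transverse intersections at which the FIOs are constructed, its sole microlocal effect is accumulated as the integral $w = \int_{\R} W ( x_{1} ( t ) ) \, d t$ along $\gamma_{1}$ by solving the transport equation on that leg, which is a routine extension of the analysis in Chapter~11 of \cite{BoFuRaZe18_01}. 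Once this verification is secured, the Neumann inversion closes and the lemma follows.
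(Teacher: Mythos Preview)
Your proposal and the paper's proof rely on the same two ingredients---the transport modification along $\gamma_{1}$ producing the factor $e^{-i w h^{\delta}}$, and the inversion bound \eqref{a18} from Lemma~\ref{a15}~$ii)$---but they are packaged differently. The paper does \emph{not} invoke a finite-dimensional Grushin/Schur reduction with a matrix error $\CE = o ( h^{\delta} )$ and a reconstruction formula. Instead it runs the quasimode argument of Chapter~8 of \cite{BoFuRaZe18_01} directly: assuming $( \widetilde{P}_{\theta} - z ) u = \CO ( h^{\infty} )$ with $\Vert u \Vert = 1$, one shows that the renormalized symbols $a_{-}^{k} \in S ( h^{- N} )$ of $u$ near $\rho_{-}^{k}$ satisfy a transport relation
\begin{equation*}
a_{-}^{k} ( x , h ) = h^{S / \lambda_{1} - 1 / 2} \sum_{\ell} \CP_{k , \ell} ( x , h ) a_{-}^{\ell} ( x_{-}^{\ell} , h ) + S ( h^{- N + \zeta - \delta} ) ,
\end{equation*}
evaluates at $x = x_{-}^{k}$, applies \eqref{a18} to gain $h^{\zeta - 2 \delta}$, and then \emph{bootstraps} (Chapter~9 of \cite{BoFuRaZe18_01}) to reach $u = \CO ( h^{\infty} )$. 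The polynomial resolvent bound then follows from the general machinery of Chapter~8, not from an explicit Schur complement.

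The distinction matters because the remainder in the transport relation is symbol-valued (it depends on the full $a_{-}^{k}$, not only on its trace at $x_{-}^{k}$), so a single Neumann step does not close the argument; this is precisely why the paper iterates. Your clean matrix formulation $\CF = 1 - h^{S/\lambda_{1} - 1/2} \widetilde{\CQ} + \CE$ with $\Vert \CE \Vert = o ( h^{\delta} )$ is a legitimate alternative, but it is not what \cite{BoFuRaZe18_01} literally provides: you would first have to extract a genuine finite-rank Grushin problem from the Lagrangian analysis there, which is additional work you have deferred to ``the machinery adapts directly''. If that extraction is carried out (or if your Neumann step is recognized as one turn of the bootstrap and then iterated), your argument and the paper's coincide.
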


\begin{figure}
\begin{center}
\begin{picture}(0,0)%
\includegraphics{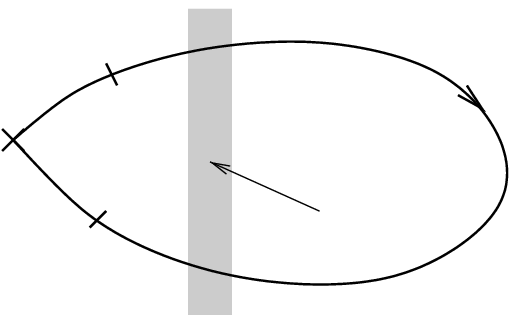}%
\end{picture}%
\setlength{\unitlength}{1381sp}%
\begingroup\makeatletter\ifx\SetFigFont\undefined%
\gdef\SetFigFont#1#2#3#4#5{%
  \reset@font\fontsize{#1}{#2pt}%
  \fontfamily{#3}\fontseries{#4}\fontshape{#5}%
  \selectfont}%
\fi\endgroup%
\begin{picture}(6990,4202)(3418,-6362)
\put(4351,-2761){\makebox(0,0)[lb]{\smash{{\SetFigFont{9}{10.8}{\rmdefault}{\mddefault}{\updefault}$\rho_{+}^{k}$}}}}
\put(3901,-4111){\makebox(0,0)[lb]{\smash{{\SetFigFont{9}{10.8}{\rmdefault}{\mddefault}{\updefault}$0$}}}}
\put(10051,-3211){\makebox(0,0)[lb]{\smash{{\SetFigFont{9}{10.8}{\rmdefault}{\mddefault}{\updefault}$\gamma_{k}$}}}}
\put(7951,-5011){\makebox(0,0)[lb]{\smash{{\SetFigFont{9}{10.8}{\rmdefault}{\mddefault}{\updefault}$\supp W$}}}}
\put(4051,-5461){\makebox(0,0)[lb]{\smash{{\SetFigFont{9}{10.8}{\rmdefault}{\mddefault}{\updefault}$\rho_{-}^{k}$}}}}
\end{picture}%
\end{center}
\caption{The geometric setting in the proof of Lemma \ref{a21}.} \label{f6}
\end{figure}

\begin{proof}[Proof of Lemma \ref{a21}]
This result is just an adaptation of Proposition 11.4 of \cite{BoFuRaZe18_01}. We only give the changes which have to be made in the present setting, sending back the reader to Section 11.2 of \cite{BoFuRaZe18_01} for the technical details. From the general arguments of Chapter 8 of \cite{BoFuRaZe18_01}, it is enough to show that any $u = u ( h ) \in L^{2} ( \R^{2} )$ and $z = z ( h ) \in \eqref{a22}$ with
\begin{equation} \label{a23}
\left\{ \begin{aligned}
&( \widetilde{P}_{\theta} - z ) u = \CO ( h^{\infty} ) ,    \\
&\Vert u \Vert_{L^{2} ( \R^{2} )} = 1 ,
\end{aligned} \right.
\end{equation}
vanishes microlocally near each point of $K ( E_{0} )$. For $k = 1 , 2 , 3$, let $u_{\pm}^{k}$ be microlocal restrictions of $u$ near $\rho_{\pm}^{k}$, where $\rho_{-}^{k}$ (resp. $\rho_{+}^{k}$) is a point on $\gamma_{k}$ just ``before'' (resp. ``after'') $ ( 0 , 0 )$ (see Figure \ref{f6}). As in \cite[(11.23)]{BoFuRaZe18_01}, they are Lagrangian distributions
\begin{equation*}
u_{-}^{k} \in \CI ( \Lambda_{+}^{1 , k} , h^{-N} ) \qquad \text{and} \qquad u_{+}^{k} \in \CI ( \Lambda_{+}^{0} , h^{-N} ) ,
\end{equation*}
associated to the Lagrangian manifold $\Lambda_{+}$ just after $( 0 , 0 )$ (denoted $\Lambda_{+}^{0}$) and after a turn along $\gamma_{k}$ (denoted $\Lambda_{+}^{1 , k}$) for some $N \in \R$.

After an appropriate renormalization (see \cite[(11.25)]{BoFuRaZe18_01}), the symbols $a_{-}^{k} ( x , h ) \in S ( h^{- N} )$ of $u_{-}^{k}$ satisfy the relation
\begin{equation} \label{a25}
a_{-}^{k} ( x , h ) = h^{S ( z , h ) / \lambda_{1} - 1 / 2} \sum_{\ell = 1}^{3} \CP_{k , \ell} ( x , h ) a_{-}^{\ell} ( x_{-}^{\ell} , h ) + S ( h^{- N + \zeta - \delta} ) ,
\end{equation}
near $x_{-}^{k} = \pi_{x} ( \rho_{-}^{k} )$. In this expression, the symbols $\CP_{k , \ell} \in S (1)$ (resp. the constant $\zeta > 0$) are independent of $u$ (resp. $\delta , u$) and $\CP_{k , \ell} ( x_{-}^{k} , h ) = \widetilde{\CQ}_{k , \ell} ( z , h )$. Compared with \cite[(11.27)]{BoFuRaZe18_01}, $\CQ$ is replaced by $\widetilde{\CQ}$ in $\CP_{k , \ell} ( x_{-}^{k} , h )$. Indeed, no change has to be made for the propagation through the fixed point $( 0 , 0 )$ since $W$ is supported away from $V_{\rm top}$ (see \cite[(11.29)]{BoFuRaZe18_01}), but the usual transport equation near $\gamma_{k}$
\begin{equation*}
2 \nabla \varphi_{+} \cdot \nabla a_{-}^{k} + ( \Delta \varphi_{+} - i \sigma ) a_{-}^{k} = \CO ( h^{- N + 1} ) ,
\end{equation*}
with $\sigma = ( z - E_{0} ) / h$ is replaced by
\begin{equation*}
2 \nabla \varphi_{+} \cdot \nabla a_{-}^{k} + ( \Delta \varphi_{+} - i \sigma + i h^{\delta} W ) a_{-}^{k} = \CO ( h^{- N + 1} ) ,
\end{equation*}
giving on the curve $\gamma_{k}$
\begin{equation*}
\partial_{t} a_{-}^{k} ( x_{k} ( t ) ) + ( \Delta \varphi_{+} - i \sigma + i h^{\delta} W ) a_{-}^{k} ( x_{k} ( t ) ) = \CO ( h^{- N + 1} ) ,
\end{equation*}
and leading to the additional factor $e^{- i h^{\delta} \int W ( x_{k} ( t ) ) \, d t}$ in the quantization matrix $\widetilde{\CQ}$ (see \cite[(11.31)]{BoFuRaZe18_01}). On the other hand, the remainder term $\CO ( h^{- N + \zeta - \delta} )$ in \eqref{a25} comes from the fact that $\vert h^{S / \lambda_{1} - 1 / 2} \vert \lesssim h^{- \delta}$ uniformly for $z \in \eqref{a22}$ (see Chapter 12.2 of \cite{BoFuRaZe18_01} for a similar argument).

Applying \eqref{a25} with $x = x_{-}^{k}$, we get
\begin{equation*}
\big( 1 - h^{S ( z , h ) / \lambda_{1} - 1 / 2} \widetilde{\CQ} ( z , h ) \big) a_{-} ( x_{-} , h ) = \CO ( h^{- N + \zeta - \delta} ) ,
\end{equation*}
where $a_{-} ( x_{-} , h )$ is a shortcut for the $3$-vector with coefficients $a_{-}^{k} ( x_{-}^{k} , h )$. From \eqref{a18}, it yields
\begin{equation*}
\vert a_{-} ( x_{-} , h ) \vert \lesssim h^{- N + \zeta - 2 \delta} ,
\end{equation*}
uniformly for $z \in \eqref{a22}$. Using again \eqref{a25}, we deduce $a_{-}^{k} \in S ( h^{- N + \zeta - 3 \delta} ) \subset S ( h^{- N + \zeta /2} ) $ for $\delta > 0$ small enough. Thus, starting from $a_{-}^{k} \in S ( h^{- N} )$, we have proved $a_{-}^{k} \in S ( h^{- N + \zeta / 2} )$. By a bootstrap argument (see the end of Chapter 9 of \cite{BoFuRaZe18_01}), we obtain $u = \CO ( h^{\infty} )$ microlocally near $K ( E_{0} )$ and the lemma follows.
\end{proof}

To finish the proof of Lemma \ref{a20}, it remains to show that $\widetilde{P}$ has a resonance near each pseudo-resonance. That is

\begin{lemma}\sl \label{a26}
For $\delta > 0$ small enough, $C , \beta > 0$ and $h \in \SH$ small enough, the operator $\widetilde{P}$ has at least one resonance in $B ( z , \beta h \vert \ln h \vert^{- 1} )$ for any pseudo-resonance $z \in \eqref{a12}$.
\end{lemma}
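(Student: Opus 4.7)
The plan is to argue by contradiction, producing at each pseudo-resonance $z_0 \in \eqref{a12}$ a normalized quasimode for $\widetilde{P}_{\theta} - z_0$ and playing it against the polynomial resolvent bound of Lemma \ref{a21}. Assume $\widetilde{P}$ has no resonance in $B(z_0 , \beta h / \vert \ln h \vert)$. Then Lemma \ref{a21} applies on this whole ball and yields $\Vert ( \widetilde{P}_{\theta} - z )^{-1} \Vert \lesssim h^{-M}$ uniformly in $z \in B(z_0 , \beta h/\vert \ln h \vert)$. If I can produce $u = u(h) \in L^{2}(\R^{2})$ with $\Vert u \Vert_{L^2} \gtrsim 1$ and $( \widetilde{P}_{\theta} - z_0 ) u = \CO ( h^{\infty} )$, applying the resolvent bound at $z = z_0$ gives the desired contradiction.

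To construct the quasimode, I would reverse the analysis of Lemma \ref{a21}: prescribe a $3$-vector $a_{-} ( x_{-} , h ) \in \C^{3}$ of boundary data at the entrance points $\rho_{-}^{k}$, solve the transport equation along each trajectory $\gamma_{k}$, propagate through the fixed point $(0,0)$ via the microlocal normal form of \cite{BoFuRaZe18_01} (which is what introduces the $\Gamma ( S / \lambda_{1} )$ factor in $\widetilde\CQ$), and glue the resulting Lagrangian distributions into a global $u$. This produces a solution of $( \widetilde{P}_{\theta} - z_0 ) u = \CO ( h^{- N + \zeta - \delta} )$ provided the matching relation
\begin{equation*}
\bigl( 1 - h^{S(z_0,h)/\lambda_{1} - 1/2} \widetilde{\CQ}(z_0 , h) \bigr) a_{-} ( x_{-} , h ) = \CO ( h^{- N + \zeta - \delta} )
\end{equation*}
holds. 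At the pseudo-resonance, the factorization \eqref{a29} combined with $\CQ^{2} = 0$ shows that the matrix $1 - h^{S / \lambda_{1} - 1 / 2} \widetilde{\CQ}$ has a one-dimensional kernel close to the line spanned by $e_{1} = ( 1 , 0 , 0 )^{T}$, since $\CW \CQ$ is rank one with unique nonzero eigenvalue $-i w \CQ_{1,1}$ and range $\C e_{1}$. Choosing $a_{-} ( x_{-} , h ) \approx e_{1}$ and iterating, one absorbs successive polynomial errors: the range component of each remainder is inverted using the uniformly bounded restriction of $(1 - h^{S/\lambda_{1}-1/2} \widetilde{\CQ})^{-1}$ to the two-dimensional complement of its kernel (whose norm on $B(z_0, \beta h/\vert \ln h \vert)$ behaves as in \eqref{a18}), while the kernel component is reinjected into the free parameter $a_{-}$ via a rank-one correction. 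Taking $\delta$ small compared to the microlocal gain $\zeta$ inherited from \cite{BoFuRaZe18_01} makes each step decrease the error by a fixed positive power of $h$, yielding in the limit $( \widetilde{P}_{\theta} - z_0 ) u = \CO ( h^{\infty} )$.

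The main obstacle is checking that the resulting $u$ is genuinely normalized, i.e.\ $\Vert u \Vert_{L^{2}} \gtrsim 1$. Although the seed data $a_{-} \approx e_{1}$ has size one, a priori nothing prevents the Lagrangian distribution $u_{-}^{1}$ attached to $a_{-}^{1} \sim 1$ from concentrating on a caustic or shrinking in $L^{2}$. The non-vanishing of $\CM_{1}^{\pm}$ and $g_{\pm}^{1}$ recorded in Section \ref{s2} guarantees that the principal symbol of $u_{-}^{1}$ is uniformly non-degenerate on an open piece of $\pi_{x} ( \gamma_{1} )$ where $\Lambda_{+}^{1,1}$ projects diffeomorphically onto the base, giving $\Vert u_{-}^{1} \Vert_{L^{2}} \gtrsim 1$ and hence $\Vert u \Vert_{L^{2}} \gtrsim 1$. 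Once this is settled, the chain of inequalities $1 \lesssim \Vert u \Vert_{L^{2}} \leq \Vert ( \widetilde{P}_{\theta} - z_0 )^{-1} \Vert \cdot \Vert ( \widetilde{P}_{\theta} - z_0 ) u \Vert_{L^{2}} \lesssim h^{-M} \cdot \CO ( h^{\infty} )$ contradicts the working assumption, forcing $\widetilde{P}$ to possess a resonance in $B ( z_0 , \beta h / \vert \ln h \vert )$.
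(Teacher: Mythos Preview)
Your argument has a genuine gap at the very first step: Lemma \ref{a21} does \emph{not} give a polynomial resolvent bound on the whole ball $B(z_{0},\beta h/\vert\ln h\vert)$. Its estimate is stated only on the set \eqref{a22}, which explicitly \emph{excludes} the $\beta h/\vert\ln h\vert$-neighborhood of every pseudo-resonance, in particular $z_{0}$ itself. The proof of Lemma \ref{a21} relies on \eqref{a18}, which needs $\dist(z,\res_{0}(\widetilde P))>\beta h/\vert\ln h\vert$; at $z_{0}$ the matrix $1-h^{S/\lambda_{1}-1/2}\widetilde\CQ$ is (nearly) singular and the bootstrap there breaks down. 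This gap can be repaired: under your contradiction hypothesis the resolvent is holomorphic on a slightly larger disk, Lemma \ref{a21} controls it on the boundary circle (which, for $\beta$ small and using the $2\pi\lambda_{1}h/\vert\ln h\vert$ spacing from Lemma \ref{a11}, lies in \eqref{a22}), and the scalar maximum principle applied to $\langle(\widetilde P_{\theta}-z)^{-1}v,w\rangle$ pushes the bound inside. But you must say this.

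Even with that fix, your route diverges from the paper's and is technically heavier. The paper does \emph{not} build an $\CO(h^{\infty})$ quasimode. Instead it fixes a single compactly supported test function $v=\Op(\psi)[\widetilde P,\Op(\chi)]\widehat v$, sets $u(\widetilde z)=(\widetilde P_{\theta}-\widetilde z)^{-1}v$ for $\widetilde z\in\partial\CD$ (where Lemma \ref{a21} legitimately applies), and looks at the contour integral $\SI=(2i\pi)^{-1}\oint_{\partial\CD}u\,d\widetilde z$. If there were no resonance in $\CD$, holomorphy would force $\SI=0$; but the symbol calculus, together with \eqref{a29} and the fact that $\widetilde a(x_{-},h)={}^{t}(1,0,0)$ is the eigenvector of $\CW\CQ$ for its nonzero eigenvalue, yields an explicit leading term $i\lambda_{1}h^{1-\delta}/\vert\ln h\vert$ for $\CW b(x_{-},h)$, hence $\SI\neq 0$. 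This avoids entirely the delicate global gluing you propose: you never have to patch local Lagrangian pieces across the fixed point into a single $L^{2}$ function with $(\widetilde P_{\theta}-z_{0})u=\CO(h^{\infty})$, nor iterate away remainders while keeping track of the distinction between the mismatch in \eqref{a25} (a $3$-vector of symbol values) and the actual size of $(\widetilde P_{\theta}-z_{0})u$ in $L^{2}$. Your sketch treats these as interchangeable, and the ``rank-one correction'' step that reinjects the kernel component is not clearly compatible with the symbol-level transport needed to propagate the improvement globally. The contour-integral method, by contrast, needs only a \emph{finite}-order symbol computation on $\partial\CD$ and a single residue evaluation.
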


\begin{proof}[Proof of Lemma \ref{a26}]
This result is equivalent to Proposition 11.6 of \cite{BoFuRaZe18_01} in the present setting, and we only explain how to adapt its proof. If Lemma \ref{a26} did not hold, there would exist a sequence $z = z( h ) \in \eqref{a12}$ of pseudo-resonances where $h \in \SH$ goes to $0$ such that
\begin{equation} \label{a27}
\widetilde{P} \text{ has no resonance in } \CD = B \Big( z , \beta \frac{h}{\vert \ln h \vert} \Big) .
\end{equation}

We now construct a ``test function''. Let $\widetilde{v}$ be a WKB solution near $x_{-} ^{1}$ of
\begin{equation*}
\left\{ \begin{aligned}
&( \widetilde{P} - \widetilde{z} ) \widetilde{v} = 0 &&\text{near } x_{-}^{1} ,   \\
&\widetilde{v} (x) = e^{i \varphi_{+}^{1 , 1} ( x ) / h} &&\text{on } \vert x \vert = \vert x_{-}^{1} \vert \text{ near } x_{-}^{1} ,
\end{aligned} \right.
\end{equation*}
for $\widetilde{z} \in \partial \CD$, where $\varphi_{+}^{1 , 1}$ is a generating function of $\Lambda_{+} ^{1 , 1}$. Note that $\widetilde{P} = P$ near $x_{-}^{1}$ and that $\widetilde{v}$ can be chosen holomorphic with respect to $\widetilde{z}$ near $\CD$. After multiplication by a renormalization factor as in \cite[(11.44)]{BoFuRaZe18_01}, this function is denoted $\widehat{v}$. Consider cut-off functions $\chi , \psi \in C^{\infty}_{0} ( T^{*} \R^{2} )$ such that $\chi = 1$ near $\rho_{-}^{1}$ and $\psi = 1$ near the part of the curve $\supp ( \nabla \chi ) \cap \gamma_{1}$ before $\rho_{-}^{1}$. Then, we take as ``test function''
\begin{equation*}
v = \Op ( \psi ) \big[ \widetilde{P} , \Op ( \chi ) \big] \widehat{v} .
\end{equation*}

Let $u \in L^{2} ( \R^{2} )$ be the solution of
\begin{equation} \label{a31}
( \widetilde{P}_{\theta} - \widetilde{z} ) u = v ,
\end{equation}
for $\widetilde{z} \in \partial \CD$. From Lemmas \ref{a11} and \ref{a21}, $u$ is well-defined and polynomially bounded. Let $u_{-}^{k}$ be a microlocal restriction of $u$ near $\rho_{-}^{k}$ as before. Working as in Lemma 11.10 of \cite{BoFuRaZe18_01}, one can show that $u_{-}^{k} \in \CI ( \Lambda_{+}^{1 , k} , h^{- 2 \delta} )$ with renormalized symbol $a_{-}^{k}$. Moreover, as in \eqref{a25}, we get
\begin{equation} \label{a24}
a_{-}^{k} ( x , h ) = h^{S ( z , h ) / \lambda_{1} - 1 / 2} \sum_{\ell = 1}^{3} \CP_{k , \ell} ( x , h ) a_{-}^{\ell} ( x_{-}^{\ell} , h ) + \widetilde{a}_{k} ( x , h ) + S ( h^{\zeta - 3 \delta} ) ,
\end{equation}
near $x_{-}^{k}$, where $\widetilde{a}_{k}$ denotes the symbol of $\widetilde{v}$ near $x_{-}^{k}$. In particular, $\widetilde{a}_{k} ( x , h ) = 0$ for $k \neq 1$ and $\widetilde{a}_{1} ( x_{-}^{1} , h ) = 1$. This relation is obtained using the proofs of \eqref{a25} and Lemma 11.8 of \cite{BoFuRaZe18_01}.

To obtain a contradiction with \eqref{a27}, we consider
\begin{equation} \label{a28}
\SI = \frac{1}{2 i \pi} \int_{\partial \CD} u ( \widetilde{z} ) \, d \widetilde{z} .
\end{equation}
From the properties of $u_{-}^{k}$ and $\vert \partial \CD \vert = 2 \pi \beta h \vert \ln h \vert^{- 1}$, we have $\SI \in \CI ( \Lambda_{+}^{1 , k} , h^{1 - 2 \delta} \vert \ln h \vert^{- 1} )$ microlocally near $\rho_{-}^{k}$, where its renormalized symbol $b_{k} ( x , h )$ satisfies
\begin{equation} \label{a30}
b_{k} ( x , h ) = \frac{1}{2 i \pi} \int_{\partial \CD} a_{-}^{k} ( x , h ) \, d \widetilde{z} .
\end{equation}
Applying \eqref{a24} with $x = x_{-}^{k}$ leads to
\begin{equation*}
\big( 1 - h^{S ( z , h ) / \lambda_{1} - 1 / 2} \widetilde{\CQ} ( z , h ) \big) a_{-} ( x_{-} , h ) = \widetilde{a} ( x_{-} , h ) + \CO ( h^{\zeta - 3 \delta} ) ,
\end{equation*}
where $c ( x_{-} , h )$ is a generic shortcut for the $3$-vector with coefficients $c^{k} ( x_{-}^{k} , h )$. It implies
\begin{align*}
a_{-} ( x_{-} , h ) &= \big( 1 - h^{S / \lambda_{1} - 1 / 2} \widetilde{\CQ} \big)^{- 1} \widetilde{a} ( x_{-} , h ) + \CO ( h^{\zeta - 4 \delta} )   \\
&= \big( 1 + h^{S / \lambda_{1} - 1 / 2} \CQ \big) \big( 1 - h^{S / \lambda_{1} - 1 / 2 + \delta} \CW \CQ \big)^{- 1} \widetilde{a} ( x_{-} , h ) + \CO ( 1 ) + \CO ( h^{\zeta - 4 \delta} ) .
\end{align*}
from \eqref{a18} and \eqref{a29}. We deduce
\begin{align*}
\CW a_{-} ( x_{-} , h ) ={}& \CW \big( 1 - h^{S / \lambda_{1} - 1 / 2 + \delta} \CW \CQ \big)^{- 1} \widetilde{a} ( x_{-} , h ) - h^{- \delta} \widetilde{a} ( x_{-} , h )   \\
&+ h^{- \delta} \big( 1 - h^{S / \lambda_{1} - 1 / 2 + \delta} \CW \CQ \big)^{- 1} \widetilde{a} ( x_{-} , h ) + \CO ( 1 ) + \CO ( h^{\zeta - 4 \delta} ) .
\end{align*}
Inserting this expression in \eqref{a30} and using \eqref{a14} yield
\begin{equation*}
\CW b ( x_{-} , h ) = \frac{h^{- \delta}}{2 i \pi} \int_{\partial \CD} \big( 1 - h^{S / \lambda_{1} - 1 / 2 + \delta} \CW \CQ \big)^{- 1} \widetilde{a} ( x_{-} , h ) \, d \widetilde{z} + \CO \Big( \frac{h}{\vert \ln h \vert} \Big) + \CO \Big( \frac{h^{1 + \zeta - 4 \delta}}{\vert \ln h \vert} \Big) .
\end{equation*}
Note that $\widetilde{a} ( x_{-} , h ) = {}^{t} ( 1 , 0 , 0 )$ is an explicit eigenvector of $\CW \CQ$ associated to its non-zero eigenvalue $- i w \CQ_{1 , 1} ( z , h )$ (see \eqref{a9}). Thus, computing the integral as in \cite[(11.67)]{BoFuRaZe18_01}, we get
\begin{equation*}
\CW b ( x_{-} , h ) = i \lambda_{1} \frac{h^{1 - \delta}}{\vert \ln h \vert} \widetilde{a} ( x_{-} , h ) + o \Big( \frac{h^{1 - \delta}}{\vert \ln h \vert} \Big) + \CO \Big( \frac{h}{\vert \ln h \vert} \Big) + \CO \Big( \frac{h^{1 + \zeta - 4 \delta}}{\vert \ln h \vert} \Big) .
\end{equation*}
Taking $\delta > 0$ small enough and using that $\CW b \in S ( h^{1 - 2 \delta} \vert \ln h \vert^{- 1} )$, the previous asymptotic shows that $\CW b \neq 0$ so that $\SI \neq 0$. On the other hand, since $\widetilde{P}$ has no resonance in $\CD$ (see \eqref{a27}), the function $u$ defined by \eqref{a31} is holomorphic in $\CD$ and \eqref{a28} gives $\SI = 0$. Eventually, we get a contradiction and the lemma follows.
\end{proof}

The second point of Lemma \ref{a20} is a direct consequence of Lemmas \ref{a21} and \ref{a26}.
\end{proof}

\bibliographystyle{amsplain}


\begin{thebibliography}{10}

\bibitem{Ag98_01}
S.~Agmon, \emph{A perturbation theory of resonances}, Comm. Pure Appl. Math.
  \textbf{51} (1998), no.~11-12, 1255--1309.

\bibitem{Ag98_02}
S.~Agmon, \emph{Erratum: ``{A} perturbation theory of resonances''}, Comm. Pure
  Appl. Math. \textbf{52} (1999), no.~12, 1617--1618.

\bibitem{BoBuRa10_01}
J.-F Bony, N.~Burq, and T.~Ramond, \emph{Minoration de la r\'{e}solvante dans
  le cas captif}, C. R. Math. Acad. Sci. Paris \textbf{348} (2010), no.~23-24,
  1279--1282.

\bibitem{BoFuRaZe18_01}
J.-F. Bony, S.~Fujii\'e, T.~Ramond, and M.~Zerzeri, \emph{Resonances for
  homoclinic trapped sets}, Ast\'erisque (2018), no.~405, vii+314.

\bibitem{DyWa16_01}
S.~Dyatlov and A.~Waters, \emph{Lower resolvent bounds and {L}yapunov
  exponents}, Appl. Math. Res. Express. AMRX (2016), no.~1, 68--97.

\bibitem{DyZw16_01}
S.~Dyatlov and M.~Zworski, \emph{Mathematical theory of scattering resonances},
  Graduate Studies in Mathematics, vol. 200, American Mathematical Society,
  2019.

\bibitem{FuLaMa11_01}
S.~Fujii{\'e}, A.~Lahmar-Benbernou, and A.~Martinez, \emph{Width of shape
  resonances for non globally analytic potentials}, J. Math. Soc. Japan
  \textbf{63} (2011), no.~1, 1--78.

\bibitem{Ge88_01}
C.~G{\'e}rard, \emph{Asymptotique des p\^oles de la matrice de scattering pour
  deux obstacles strictement convexes}, M\'em. Soc. Math. France (1988),
  no.~31, 146.

\bibitem{GeSj87_01}
C.~G{\'e}rard and J.~Sj{\"o}strand, \emph{Semiclassical resonances generated by
  a closed trajectory of hyperbolic type}, Comm. Math. Phys. \textbf{108}
  (1987), 391--421.

\bibitem{HeSj86_01}
B.~Helffer and J.~Sj{\"o}strand, \emph{R\'esonances en limite semi-classique},
  M\'em. Soc. Math. France (1986), no.~24-25, iv+228.

\bibitem{Ik83_01}
M.~Ikawa, \emph{On the poles of the scattering matrix for two strictly convex
  obstacles}, J. Math. Kyoto Univ. \textbf{23} (1983), no.~1, 127--194.

\bibitem{NoZw09_01}
S.~Nonnenmacher and M.~Zworski, \emph{Quantum decay rates in chaotic
  scattering}, Acta Math. \textbf{203} (2009), no.~2, 149--233.

\bibitem{Sj87_01}
J.~Sj{\"o}strand, \emph{Semiclassical resonances generated by nondegenerate
  critical points}, Pseudodifferential operators (Oberwolfach, 1986), Lecture
  Notes in Math., vol. 1256, Springer, 1987, pp.~402--429.

\bibitem{Sj07_01}
J.~Sj{\"o}strand, \emph{Lectures on resonances}, preprint at {\tt
  http://sjostrand.perso.math.cnrs.fr/} (2007), 1--169.

\bibitem{TrEm05_01}
L.~Trefethen and M.~Embree, \emph{Spectra and pseudospectra}, Princeton
  University Press, 2005, The behavior of nonnormal matrices and operators.

\end{thebibliography}

\end{document}